\begin{document}

\newtheorem{thm}{Theorem}
\newtheorem{lem}[thm]{Lemma}
\newtheorem{claim}[thm]{Claim}
\newtheorem{cor}[thm]{Corollary}
\newtheorem{prop}[thm]{Proposition}
\newtheorem{definition}{Definition}
\newtheorem{question}[thm]{Open Question}
\newtheorem{conj}[thm]{Conjecture}
\newtheorem{prob}{Problem}

\def\squareforqed{\hbox{\rlap{$\sqcap$}$\sqcup$}}
\def\qed{\ifmmode\squareforqed\else{\unskip\nobreak\hfil
\penalty50\hskip1em\null\nobreak\hfil\squareforqed
\parfillskip=0pt\finalhyphendemerits=0\endgraf}\fi}

\def\cA{{\mathcal A}}
\def\cB{{\mathcal B}}
\def\cC{{\mathcal C}}
\def\cD{{\mathcal D}}
\def\cE{{\mathcal E}}
\def\cF{{\mathcal F}}
\def\cG{{\mathcal G}}
\def\cH{{\mathcal H}}
\def\cI{{\mathcal I}}
\def\cJ{{\mathcal J}}
\def\cK{{\mathcal K}}
\def\cL{{\mathcal L}}
\def\cM{{\mathcal M}}
\def\cN{{\mathcal N}}
\def\cO{{\mathcal O}}
\def\cP{{\mathcal P}}
\def\cQ{{\mathcal Q}}
\def\cR{{\mathcal R}}
\def\cS{{\mathcal S}}
\def\cT{{\mathcal T}}
\def\cU{{\mathcal U}}
\def\cV{{\mathcal V}}
\def\cW{{\mathcal W}}
\def\cX{{\mathcal X}}
\def\cY{{\mathcal Y}}
\def\cZ{{\mathcal Z}}

\def\ord{{\mathrm{ord}\,}}
\def\ind{{\mathrm{ind}\,}}
\def \F {{\mathbb F}}
\def \L {{\mathbb L}}
\def \K {{\mathbb K}}
\def \Q {{\mathbb Q}}
\def \R {{\mathbb R}}
\def \Z {{\mathbb Z}}

\def \fB{\mathfrak B}

\def\\{\cr}
\def\({\left(}
\def\){\right)}
\def\fl#1{\left\lfloor#1\right\rfloor}
\def\rf#1{\left\lceil#1\right\rceil}

\newcommand{\pfrac}[2]{{\left(\frac{#1}{#2}\right)}}

\def\sssum{\mathop{\sum\!\sum\!\sum}}
\def\ssum{\mathop{\sum\, \sum}}

\def \Prob{{\mathrm {}}}
\def\e{\mathbf{e}}
\def\ep{{\mathbf{\,e}}_p}
\def\epp{{\mathbf{\,e}}_{p^2}}
\def\em{{\mathbf{\,e}}_m}

\def\Res{\mathrm{Res}}

\def\tz{\widetilde z}
\def\tX{\widetilde X}
\def\tx{\widetilde x}
\def\tu{\widetilde u}
\def\tv{\widetilde v}
\def\tw{\widetilde w}

\renewcommand{\vec}[1]{\mathbf{#1}}

\def\va{\vec{a}}
\def\vb{\vec{b}}
\def\Ha{H_{\vec{a}}}
\def\Hb{H_{\vec{b}}}

\def \li {\mathrm {li}\,}

\def\mand{\qquad\mbox{and}\qquad}

\newcommand{\comm}[1]{\marginpar{%
\vskip-\baselineskip 
\raggedright\footnotesize
\itshape\hrule\smallskip#1\par\smallskip\hrule}}


\title[Points on Curves in Small Boxes and Applications]
{Points on Curves in Small Boxes and Applications}

\author[M.-C. Chang {\it etal.}]
{Mei-Chu Chang}
\address{Department of Mathematics, University of California,
Riverside,  CA 92521, USA}
\email{mcc@math.ucr.edu}

\author[ ]
{Javier Cilleruelo}
\address{Instituto de Ciencias Matem\'{a}ticas (CSIC-UAM-UC3M-UCM) and Departamento de Matem\'aticas, Universidad Aut\'onoma de Madrid,
28049, Madrid, Espa\~na}
\email{franciscojavier.cilleruelo@uam.es}

\author[ ]
{Moubariz~Z.~Garaev}
\address{Centro de Ciencias Matem\'{a}ticas, Universidad Nacional Aut\'onoma de M\'{e}xico,
C.P. 58089, Morelia, Michoac\'{a}n, M\'{e}xico}
\email{garaev@matmor.unam.mx}
\author[ ]
{Jos\'e Hern\'andez}
\address{Centro de Ciencias  Matem\'{a}ticas, Universidad Nacional Aut\'onoma de M\'{e}xico,
C.P. 58089, Morelia, Michoac\'{a}n, M\'{e}xico}
\email{stgo@matmor.unam.mx}
\author[ ]
{Igor E. Shparlinski}
\address{Department of Computing, Macquarie University, Sydney, NSW 2109, Australia}
\email{igor.shparlinski@mq.edu.au}
\author[ ]
{Ana Zumalac\'{a}rregui}
\address{Instituto de Ciencias Matem\'{a}ticas (CSIC-UAM-UC3M-UCM) and Departamento de Matem\'aticas, Universidad Aut\'onoma de Madrid,
28049, Madrid, Espa\~na} \email{ana.zumalacarregui@uam.es}

\date{}

\begin{abstract}  We introduce
several new methods to obtain upper bounds on the
number of solutions of the congruences
$$
f(x) \equiv y \pmod p \mand f(x) \equiv y^2 \pmod p,
$$
with a prime $p$ and a  polynomial $f$,
where $(x,y)$ belongs to an
arbitrary square with side length $M$. We use these results and methods to
derive non-trivial
upper bounds for the number of
hyperelliptic curves
$$Y^2=X^{2g+1} + a_{2g-1}X^{2g-1} + \ldots + a_1X+a_0$$
over  the finite field $\F_p$ of $p$ elements, with coefficients in a $2g$-dimensional cube
$$
(a_0,\ldots, a_{2g-1})\in [R_0+1,R_0+M]\times \ldots \times
[R_{2g-1}+1,R_{2g-1}+M]
$$
that are isomorphic to a given curve and give an almost sharp
lower bound on the number of non-isomorphic hyperelliptic curves
with coefficients in that cube. Furthermore, we study the size of
the smallest box that contain a partial trajectory of a
polynomial dynamical system over $\F_p$.
\end{abstract}

\maketitle


\section{Introduction}

\subsection{Motivation}

Studying the distribution of integer and rational points on curves, and more general on
algebraic varieties that belong to a given box is a classical topic in analytic
number theory. For the case of plane curves with integer coefficients,
essentially the best
possible results are due to Bombieri and Pila~\cite{BoPi,Pil1,Pil2}. Furthermore,
recently a remarkable progress has been made in the case
of hypersurfaces  and varieties over the rationals,
see the surveys~\cite{Brow,HB2,Tsch}
as well as the original works~\cite{Marm1,Marm2,SalWool}.

Significantly less is
known about the distribution of  points in  boxes on curves and
varieties in finite fields. For reasonably large boxes, bounds of
exponential sums, that in turn are based on deep methods of algebraic
geometry, lead to asymptotic formulas for the number of such points,
see~\cite{Fouv,FoKa,Luo}. Certainly when the size of the box is decreasing
then beyond a certain threshold no asymptotic formula is possible (in fact
the expected number of points can be less than 1). In particular, for such
a small box only one can expect  to derive upper bounds on the
number of points on curves that hit it. This question has recently been
introduced in~\cite{CGOS} where a series of general results has been
obtained (we also mention the work~\cite{ChShp,CillGar,Zum} where this
question has been studied for some very special curves). Besides of being interesting in their own right,
such results have several applications, for example, to
polynomial dynamical systems and to so-called ``visible''
points on curves  over finite fields, see~\cite{CGOS}.

Here we make more advances in this direction, consider more general
curves and improve several previous results. However, we consider that
then main feature of the paper is a variety of methods we introduce
which we believe can be used for a number of other questions.

We also give two further applications of our results and methods.
First of all, we study the distribution of isomorphism classes of hyperelliptic curves
of genus $g\ge 1$ in
some families of curves associated with polynomials with coefficients in
a small box. In the case of elliptic curves this question has been
studied in~\cite{CSZ}. Here we improve some of the results of~\cite{CSZ}
and also use new methods to study the case of $g\ge 2$. Surprisingly enough,
in the case of the genus $g\ge 2$ we obtain estimates and use methods that do
not apply to elliptic curves (that is, to $g=1$).

Second, we consider polynomial dynamical systems and study
for how long a particular trajectory of such a system
can be ``locked'' in a given box. In particular, we extend and
improve several results of~\cite{Chang,CGOS,GuShp}.

\subsection{Basic definitions and problem formulation}

For a prime $p$, let  $\F_p$ denote the finite field of $p$ elements, which we assume to be represented by the set $\{0,1,\ldots, p-1\}$. Given a polynomial $f \in \F_p[X]$ of degree $m\ge 3$, and a positive integer $M<p$, we define by $I_{f}(M;R,S)$ the number of solutions to the congruence
\begin{equation}
\label{eq:cong y2=f(x)}
y^2\equiv f(x) \pmod p,
\end{equation}
with
\begin{equation}
\label{eq:square}
(x,y)\in [R+1, R+M]\times [S+1, S+M].
\end{equation}
If the polynomial $y^2-f(x)$ is absolutely irreducible, it is known from the Weil bounds that
\begin{equation}
\label{eq:Asymp I(M;R,S)}
I_{f}(M;R,S)=\frac{M^2}{p}+O(p^{1/2}(\log p)^2),
\end{equation}
where the implied constant depends only on $m$,
see~\cite{VajZah,Zheng}.
It is clear that the main term is dominated by the error term for
$M\le p^{3/4}\log p$, and for $M\le p^{1/2}(\log p)^2$ the result
becomes weaker than the trivial upper bound $I_{f}(M;R,S)\le 2M$. Here we use a
different approach and give nontrivial estimate of $I_{f}(M;R,S)$ for $M <
p^{1/4-\varepsilon}$ when $m=3$, and for $M<p^{1/3-\varepsilon}$ when
$m\ge 4$. In particular, in the case $m=3$ our result improves on
the range of $M$ the bound obtained in~\cite{CSZ}.
We note that  nontrivial bounds on the number of
solutions $(x,y)$ to the congruence
$$
y\equiv f(x)\pmod p,
$$
satisfying~\eqref{eq:square}, have been obtained in~\cite{CGOS} for
any $M<p$. We also mention that nontrivial bounds on the number of
solutions $(x,y)$ to the congruences
$$
xy \equiv a \pmod p,
$$
and
$$
y\equiv \vartheta^x  \pmod p,
$$
satisfying~\eqref{eq:square}, have been given in~\cite{ChShp} with
further improvements in~\cite{CillGar}. Similar results
for  the congruence
$$
Q(x,y)\equiv 0 \pmod p,
$$
where $Q(x,y)$ is an absolutely irreducible quadratic form with
a nonzero discriminant, can be found in~\cite{Zum}.

A special case of the equation~\eqref{eq:cong y2=f(x)} are
hyperelliptic curves over $\F_p$.  The problem of concentration of
points on hyperelliptic curves and polynomial maps is connected with
some problems on isomorphisms that preserve hyperelliptic curves. Let
$g$ be a fixed positive integer constant. We always assume that $p$ is
large enough so, in particular, we have $\gcd(p,2(2g+1))=1$.
Any hyperelliptic curve
can be given by a non-singular {\it Weierstrass equation:\/}
$$
\Ha: \quad Y^2 = X^{2g+1} + a_{2g-1}X^{2g-1} + \ldots +
a_1X+a_0,
$$
where $\va=(a_0,\ldots ,a_{2g-1}) \in \F_p^{2g}$
(the non-singularity condition is equivalent to non-vanishing of
the discriminant of
$X^{2g+1} + a_{2g-1}X^{2g-1} + \ldots +a_1X+a_0$), we refer
to~\cite{ACDFLNV}
for a background on hyperelliptic curves and their applications.

It follows from a more general result of Lockhart~\cite[Proposition 1.2]{Lock}
that isomorphisms that preserve hyperelliptic curves given by
Weierstrass equations are all of the form
$(x,y)\rightarrow (\alpha^2 x,\alpha^{2g+1}y)$
for some $\alpha \in \F_p^*$, see also~\cite[Section~3]{KU}.
Thus $\Ha$ is isomorphic to $\Hb$, which we
denote as $\Ha \sim \Hb$,
if there exists  $\alpha \in \F_p^*$ such that
\begin{equation}
\label{eq:isom}
a_i\equiv \alpha^{4g+2-2i}b_i\pmod p, \qquad
i=0,\ldots , 2g-1.
\end{equation}

It is known (see~\cite{KU,Nart}) that the number of non
isomorphic hyperelliptic curves of genus $g$ over $\F_p$ is
$2p^{2g-1}+O(gp^{2g-2})$. We address here the problem of estimating
from below, the number of non-isomorphic hyperelliptic curves of
genus $g$ over $\F_p$, $\Ha$, when $\va=(a_0,\ldots
,a_{2g-1})$ belongs to a small $2g$-dimensional cube
\begin{equation}
\label{eq:cube B}
\fB=[R_0+1,R_0+M]\times \ldots \times [R_{2g-1}+1,R_{2g-1}+M]
\end{equation}
with some integers $R_j$, $M$ satisfying $0 \le R_j < R_j + M < p$,
$j =0, \ldots, 2g-1$.

In particular, we note that all components of
a vector $\va\in \fB$ are  non-zero
modulo $p$. Our methods below work without this restriction
as well, however they somewhat lose their efficiency.

We also give an upper bound for the number
$$N(H;\fB)=\# \{\va=(a_0,\ldots ,a_{2g-1})\in \fB:\ \Ha\sim
H\}$$
of hyperelliptic curves
$\Ha$ with  $\va\in \fB$ that are isomorphic to a given curve $H$.

In particular, our estimates extend and improve some of the
results of~\cite{CSZ} where this problem has been
investigated for elliptic
curves (that is, for $g=1$).

First we observe that for large cubes one easily derives
from the Weil bound
 (see~\cite[Chapter~11]{IwKow}) an asymptotic formula
$$
N(H;\fB) = \frac{M^{2g}}{p^{2g-1}} + O(p^{1/2} (\log p)^{2g})
$$
(see also the proof of~\cite[Theorem~21.4]{IwKow}).
So we have an asymptotic formula for $N(H;\fB)$ as long as
$M \ge p^{1-1/(4g) + \varepsilon}$ for any fixed $\varepsilon > 0$.

However here we are mostly interested in small values of $M$.

We note that we always have the trivial upper bound
$$
N(H;\fB)\le 2M.
$$
To see this, let $H=\Hb$, $\vb=(b_0,\ldots,b_{2g-1})\in \F_p^{2g}$,
be given by a Weierstrass equation. We observe  that
if $\Ha\sim H$ and   $H=\Hb$, where $\vb=(b_0,\ldots ,b_{2g-1}) \in \F_p^{2g}$ then
$a_{2g-1}$ can take at most $M$ values in $\F_p^*$, and each $a_{2g-1}$
determines two possible values for $\alpha^2$ in~\eqref{eq:isom}.

It is also useful to remark that one can not expect to get a
general bound stronger than
$$
N(H;\fB) = O(M^{1/(2g+1)}).
$$
To see this we consider the set
$\cQ$ of quadratic residues modulo $p$ in the interval $[1, M^{1/(2g+1)}]$.
It is well-known that for almost all primes $p$ (that is, for all
except a set of relative density zero) we have
$$
\# \cQ \sim 0.5 M^{1/(2g+1)}.
$$
For example, this follows  from  a bound of
Heath-Brown~\cite[Theorem~1]{HB1} on average values
of sums of real characters.

Consider now the set
$$
\cA=\{\alpha \in \F_p~:~ \alpha^2 \in \cQ\},
$$ the curve $H:
Y^2=X^{2g+1}+X^{2g-1}+X^{2g-2}+\ldots +X+1$ and the $2g$-dimensional cube
$\fB=[1,M]^{2g}$. It is clear that  $(\alpha^4, \alpha^6,\ldots ,
\alpha^{4g+2})\in \fB$ for all $\alpha \in \cA$. On the other hand
$\# \cA = 2 \# \cQ \sim  M^{1/(2g+1)}$.

\subsection{General notation}

Throughout the paper, any implied constants in the symbols $O$, $\ll$
and $\gg$ may occasionally depend, where obvious, on the
degree of polynomial $f\in \F_p[X]$, on the genus $g$ and the real positive
parameters  $\varepsilon$ and $\delta$, and are absolute otherwise. We recall
that the notations $U = O(V)$,  $U \ll V$ and  $V \gg U$  are
all equivalent to the statement that $|U| \le c V$ holds
with some constant $c> 0$.

The letters, $h,m, n, r, s$ in both upper and lower case,
always denote
integer numbers.

\section{Our Results}

\subsection{Points on curves in small boxes}
We
combine ideas from~\cite{CillGar,CGOS,CSZ} with some  new ideas and derive
the following results.

\begin{thm}
\label{thm:ConcentElliptic}
Uniformly over all polynomials $f\in \F_p[X]$ of degree $\deg f =3$ and $1\le M < p$,
we have
$$
I_{f}(M;R,S)<M^{1/3+o(1)}+\frac{M^{5/3+o(1)}}{p^{1/6}},
$$
as $M \to \infty$.
\end{thm}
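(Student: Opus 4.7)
Let $N = I_f(M;R,S)$ be the quantity to bound. The plan splits by the size of $M$: the term $M^{1/3+o(1)}$ would come from a polynomial/Bombieri--Pila style method, and the term $M^{5/3+o(1)}/p^{1/6}$ from a character-sum/Weil bound argument; the two regimes meet around $M \sim p^{1/8}$.

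The common starting ingredient is pair counting. Given two solutions $(x_1,y_1),(x_2,y_2)$, subtract the two instances of $y^2 \equiv f(x) \pmod p$ to obtain the factorization
\[
(y_1-y_2)(y_1+y_2) \equiv (x_1-x_2)\,Q(x_1,x_2) \pmod p,
\]
where $Q$ is the symmetric quadratic in $x_1,x_2$ associated to the cubic $f$. Fixing the differences $h = x_1-x_2$, $k = y_1-y_2$ with $k\neq 0$, one can solve for $y_1$ as a quadratic polynomial in $x_1$ modulo $p$ and substitute into $y_1^2 \equiv f(x_1)\pmod p$, producing a univariate polynomial $\Phi_{h,k}(x_1)\in\F_p[x_1]$ of degree at most $4$. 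A key algebraic fact is that $\Phi_{h,k} \not\equiv 0 \pmod p$, because $f$, a squarefree cubic, is not a perfect square in $\F_p[X]$; so each such $(h,k)$ contributes at most four pairs.

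For the term $M^{5/3+o(1)}/p^{1/6}$ (dominant for $M \gg p^{1/8}$), I would expand the indicator of $y^2 \equiv f(x)\pmod p$ via the additive Fourier transform to get
\[
N = \frac{M^2}{p} + \frac{1}{p}\sum_{t=1}^{p-1}\Bigl(\sum_{x\in I_1}\ep(-tf(x))\Bigr)\Bigl(\sum_{y\in I_2}\ep(ty^2)\Bigr),
\]
where $I_1=[R+1,R+M]$, $I_2=[S+1,S+M]$, and then estimate the two inner factors using Weil's bound for the cubic exponential sum and Gauss-sum bounds for the quadratic phase, completing incomplete sums by the Polya--Vinogradov method and applying Cauchy--Schwarz in $t$. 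Optimizing yields the claimed exponent.

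For the term $M^{1/3+o(1)}$ (dominant for $M \ll p^{1/8}$), I would deploy a Bombieri--Pila style polynomial method. Lifting the solutions to $\mathbb Z^2$, invoke Siegel's lemma to construct a nonzero $P(x,y) = A(x)+y B(x)$ with integer coefficients of controlled size vanishing at all $N$ solutions modulo $p$. Reducing modulo $y^2-f(x)$ produces $A(x)^2 - f(x)B(x)^2 \in \F_p[X]$, which by the same non-square argument is not identically zero and whose vanishing at the $N$ values $x_i$ forces $N \le 2\deg A + 3$. Optimizing the degree against Siegel's lemma gives the $M^{1/3+o(1)}$ bound.

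\smallskip
\noindent\emph{Main obstacle.} The most delicate point is realizing exactly the exponent $1/3$ in the small-$M$ regime. A naive mod-$p$ pigeonhole construction of $P$ yields only trivial bounds; one must exploit the integrality and smallness of the coordinates in order to lift mod-$p$ vanishing of $P(x_i,y_i)$ to exact vanishing over $\mathbb Z$, and then carefully balance coefficient size, degree, and the prime $p$ in the Bombieri--Pila lifting. One must also address the few genuinely degenerate choices of $(h,k)$ (or of $(A,B)$) where the auxiliary univariate polynomial degenerates; these are controlled using the squarefree cubic structure of $f$ together with the constraint $p\nmid 6$.
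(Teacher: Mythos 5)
Your two-regime split does not match the mechanism that actually produces the stated bound, and both halves as you describe them would fail.

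The exponential-sum/Fourier approach you propose for $M\gg p^{1/8}$ cannot possibly give the term $M^{5/3+o(1)}/p^{1/6}$. Expanding the indicator of $y^2\equiv f(x)\pmod p$ additively and applying Weil plus completion/Polya--Vinogradov is exactly what yields~\eqref{eq:Asymp I(M;R,S)}, namely $M^2/p + O(p^{1/2}(\log p)^2)$; the error term $p^{1/2+o(1)}$ is an intrinsic barrier of that method. But the theorem's term $M^{5/3}/p^{1/6}$ is only nontrivial (i.e., below the trivial bound $2M$) when $M\ll p^{1/4}$, and in that range $p^{1/2}$ is hopelessly larger than $M$. No amount of Cauchy--Schwarz in $t$ can rescue this: orthogonality already forces the $t$-sum to contribute at least about $pM$, which after dividing by $p$ gives the trivial $M$.

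The small-$M$ side is closer in spirit, but the obstacle you correctly identify is not overcome in your proposal. Constructing $A(x)+yB(x)$ vanishing mod $p$ at $N$ solutions via Siegel's lemma needs about $N$ coefficients before a nontrivial solution exists, so the degree is already $\sim N$ and the final root count is circular. Lifting mod-$p$ vanishing to exact vanishing over $\mathbb Z$ requires the coefficients of the auxiliary polynomial to be small (of order at most $p/M^{\deg}$), and the pigeonhole you invoke gives only coefficients of size $p^{N/(\text{number of coefficients})}$, which is far too large; you state this is delicate but offer no mechanism to fix it. The actual proof uses a single argument for both regimes and does not construct any auxiliary polynomial by Siegel's lemma. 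Instead, after a pigeonhole step producing at least $I/L$ solutions in an $x$-interval of length $M/L$, one finds four solutions with $x$-coordinates within $O(M/I)$ of one another giving a nonsingular $4\times 4$ system; Cramer's rule then expresses the coefficients $c_0,\dots,c_3$ of the shifted congruence $y^2\equiv c_3x^3+c_2x^2+c_1x+c_0y$ as ratios $\Delta_j/\Delta$ of \emph{explicit small integers} (determinants bounded via the closeness of the chosen $x_i$). Clearing denominators converts the congruence into the integer equation $\Delta_1 x^3+\Delta_2 x^2+\Delta_3 x+\Delta_4 y-\Delta y^2=pz$ with $|z|\ll M^9/(pI^3L^3)$, to each value of $z$ one applies Bombieri--Pila (Lemma~\ref{lem:BombPila}) on the box $|x|,|y|\le M$, and optimizing $L$ yields the two claimed terms. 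This reduction from a modular congruence to a bounded family of genuine integer curves is the essential idea missing from your write-up.

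Finally, the pair-counting factorization $(y_1-y_2)(y_1+y_2)\equiv(x_1-x_2)Q(x_1,x_2)$ that opens your argument is never used in either branch of your proposal and plays no role in the actual proof.
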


One of the implications of Theorem~\ref{thm:ConcentElliptic} is that
for elliptic curves, that is, when the polynomial $f$ in~\eqref{eq:cong y2=f(x)}
if cubic, the bound  $I_{f}(M;R,S) <
M^{1/3+o(1)}$
holds for $M \ll p^{1/8}$, while~\cite[Theorem~6]{CSZ}
guarantees this bound only for $M\ll p^{1/9}$. We
also note that when $\deg f=3$,  our upper bounds for $I_{f}(M;R,S)$ imply the
same bounds for $N(H;\fB)$ in the case of elliptic curves.

Further, when $M<p^{1/4-\varepsilon}$ for some $\varepsilon>0$,
Theorem~\ref{thm:ConcentElliptic} guarantees a nontrivial bound
$I_{f}(M;R,S)\ll M^{1-\delta}$ with some  $\delta>0$ that depends
only on $\varepsilon$,  improving upon the range
$M<p^{1/5-\varepsilon}$ obtained in~\cite{CSZ}. However, using a
different approach, that is based on the ideas of~\cite{BGKS},
we can obtain a nontrivial bound in the range
$M<p^{1/3-\varepsilon}$.

\begin{thm}
\label{thm:ConcentEllipticWiderRange}
Uniformly over all polynomials $f\in \F_p[X]$ of degree $\deg f =3$
and $M\ge 1$, we have
$$
I_{f}(M;R,S)\le  M^{1/3+o(1)}+\(\frac{M^3}{p}\)^{1/16} M^{1+o(1)}.
$$
\end{thm}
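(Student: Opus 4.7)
The approach I would take is based on combining the elliptic curve group law with a higher-order additive energy argument and a BGKS-style sum-product estimate.

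\textbf{Reduction.} Set $T = I_f(M;R,S)$ and assume $T > M^{1/3+o(1)}$; otherwise the first term of the claimed bound already suffices. Translating $x \mapsto x + R$ replaces $f$ by another cubic polynomial without changing the count, so we may take $R = 0$, with $x_i \in [1,M]$ and $y_i \in [S+1, S+M]$. Denote by $\cP = \{P_i = (x_i, y_i)\}_{i=1}^{T}$ the set of points in the box on the elliptic curve $E: y^2 \equiv f(x) \pmod p$, and regard $\cP$ as a subset of the group $E(\F_p)$.

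\textbf{Algebraic input.} Writing $f(x) = x^3 + a_1 x + a_0$, the basic identity coming from the curve is
$$
(y_i - y_j)(y_i + y_j) \equiv (x_i - x_j)\bigl(x_i^2 + x_i x_j + x_j^2 + a_1\bigr) \pmod p,
$$
obtained from factoring $y_i^2 - y_j^2 = f(x_i) - f(x_j)$. More generally, iterating the addition formula on $E$, the relation $P_{i_1} + \cdots + P_{i_k} = P_{j_1} + \cdots + P_{j_k}$ becomes a polynomial identity in the coordinates of the points, with all differences $x_i - x_j$ and $y_i - y_j$ lying in intervals of length at most $M$.

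\textbf{Energy and BGKS bound.} For a parameter $k$ to be chosen, introduce the $k$-fold additive energy
$$
E_k(\cP) = \#\bigl\{(\vec P, \vec Q) \in \cP^k \times \cP^k : P_{i_1} + \cdots + P_{i_k} = P_{j_1} + \cdots + P_{j_k} \text{ on } E\bigr\}.
$$
Since $k$-fold sums land in $E(\F_p)$ of cardinality $\ll p$, Cauchy--Schwarz yields the lower bound $E_k(\cP) \gg T^{2k}/p$. The central BGKS-style step is to use the box constraint to upper-bound $E_k(\cP)$: the iterated algebraic identity coming from the group law is a highly restrictive Diophantine condition on coordinates in short intervals, and one bounds the number of solutions via sum-product estimates of sums and products of sets localized in $\F_p$-intervals, in the spirit of the BGKS machinery.

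\textbf{Optimization and main obstacle.} Matching the two bounds gives a constraint on $T$, and the appearance of the exponent $1/16$ strongly suggests the optimal choice $k = 8$, so that the $T^{16}$ term in the lower bound is matched against an upper bound involving a power of $M^{19}$ and $1/p$, yielding $T \ll (M^3/p)^{1/16} M^{1+o(1)}$. The technical heart of the argument, and its main obstacle, is establishing the correct upper bound on $E_k(\cP)$: unwinding the $k$-fold group law produces rational expressions of rapidly growing complexity, and the bookkeeping in the BGKS sum-product step---together with the placement of H\"older's inequality at the right stages---must be carried out with care to produce the precise exponent $1/16$ rather than a weaker one.
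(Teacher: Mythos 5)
Your proposal diverges fundamentally from the paper's argument, and more importantly contains a gap that is not merely a matter of bookkeeping but a missing core idea that your framework cannot supply. Let me spell out both points.

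The paper never invokes the group law on $E(\F_p)$. After fixing one solution $(x_0,y_0)$ and translating to obtain $y^2 - c_0 y \equiv c_3 x^3 + c_2 x^2 + c_1 x \pmod p$, the authors work purely with power sums of the $x$-coordinates: for $x_1, \ldots, x_8$ in the solution set $\cX$ they consider the triples $\bigl(\sum x_i, \sum x_i^2, \sum x_i^3\bigr)$ taken modulo $p$. Summing the congruence over $i=1,\dots,8$ produces $c_3 z_3 + c_2 z_2 + c_1 z_1 \equiv \tz_2 - c_0 \tz_1 \pmod p$, where $z_j$ lives in a box of size $O(M^j)$ and $\tz_1, \tz_2$ (built from the $y_i$) live in boxes of size $O(M)$, $O(M^2)$. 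The lower bound on the number of such triples comes from Wooley's Vinogradov-mean-value result ($\kappa(3)\le 8$, Lemma~\ref{lem:Wooley}) together with Cauchy--Schwarz, yielding $\#\cS \ge \rho^{16} M^{6+o(1)}$ — this is the true source of the exponent $16$, not an energy optimization. The rest of the proof (and what the cited BGKS reference actually contributes) is a geometry-of-numbers argument: one views these triples as lattice points of $\Gamma \subset \Z^5$ in a convex body $D$, invokes Corollary~\ref{cor:latpoints} on successive minima, and splits into cases, eventually reducing to Bombieri--Pila (Lemma~\ref{lem:BombPila}). There is no $k$-fold group-law identity and no sum-product estimate anywhere in it.

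The gap in your version is the step you yourself flag as the ``main obstacle'': bounding $E_k(\cP)$. This is not a step that requires more careful bookkeeping — it is a step for which no tool is supplied, and I do not think one exists at the required strength. Iterating the elliptic-curve addition formula $k$ times produces rational functions of exponentially growing degree, and the box constraint $|x_i|, |y_i| \le M$ does not localize $P_{i_1}+\cdots+P_{i_k}$ in any useful way (contrast with $\sum x_i^j$, which visibly lies in an interval of length $O(M^j)$). Moreover your own arithmetic does not close: from $E_8(\cP) \gg T^{16}/p$ and a putative upper bound of order $M^{19}/p$ you would obtain $T \ll M^{19/16}$, which is weaker than the trivial $T \ll M$; to reach the stated $T \ll M(M^3/p)^{1/16}$ you would need $E_8(\cP) \ll M^{19}/p^2$, an implausibly strong bound (the diagonal alone contributes $T^8$). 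So the optimization does not produce the claimed exponent even conditionally. The fix is to abandon the group-law energy entirely and replace it with the power-sum system together with Vinogradov/Wooley and the lattice-point machinery, which is what the paper actually does.
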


The combination of Theorems~\ref{thm:ConcentElliptic}
and~\ref{thm:ConcentEllipticWiderRange} gives the following estimate:

\begin{cor}
\label{cor:ConcentElliptic} Uniformly over all polynomials $f\in
\F_p[X]$ of degree $\deg f =3$ and $1\le M < p$, we have
$$
I_{f}(M;R,S)< M^{1+o(1)}\left\{\begin{array}{llll}
M^{-2/3} ,&\quad\text{if } \ M<p^{1/8},\\
(M^4/p)^{1/6}  ,&\quad\text{if }   \ p^{1/8}\le M< p^{5/23}, \\
(M^3/p)^{1/16}  ,&\quad\text{if } \ p^{5/23}\le M<p^{1/3},
\end{array}
\right.
$$
as $M \to \infty$.
\end{cor}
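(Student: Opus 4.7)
The plan is to obtain the corollary purely as a bookkeeping exercise: at each value of $M$, take the pointwise minimum of the upper bounds supplied by Theorems~\ref{thm:ConcentElliptic} and~\ref{thm:ConcentEllipticWiderRange}, and then read off which summand of which theorem dominates in each regime.

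First I would rewrite each of the two theorems in a common shape $M^{1+o(1)}\,B(M,p)$ by factoring out $M$. Using $M^{1/3}=M\cdot M^{-2/3}$ and $M^{5/3}/p^{1/6}=M\cdot(M^4/p)^{1/6}$, Theorem~\ref{thm:ConcentElliptic} becomes
$$
I_f(M;R,S)<M^{1+o(1)}\max\bigl(M^{-2/3},\,(M^4/p)^{1/6}\bigr),
$$
while Theorem~\ref{thm:ConcentEllipticWiderRange} rewrites as
$$
I_f(M;R,S)<M^{1+o(1)}\max\bigl(M^{-2/3},\,(M^3/p)^{1/16}\bigr).
$$
Thus the combined bound is $M^{1+o(1)}\max\bigl(M^{-2/3},\,\min\bigl((M^4/p)^{1/6},(M^3/p)^{1/16}\bigr)\bigr)$.

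Next I would locate the two threshold exponents by solving the relevant equalities. The comparison $(M^4/p)^{1/6}=(M^3/p)^{1/16}$ gives $M^{46}=p^{10}$, that is, $M=p^{5/23}$; for $M<p^{5/23}$ the bound of Theorem~\ref{thm:ConcentElliptic} is tighter, and for $M\ge p^{5/23}$ the bound of Theorem~\ref{thm:ConcentEllipticWiderRange} wins. The comparison $M^{-2/3}=(M^4/p)^{1/6}$ gives $M^8=p$, that is, $M=p^{1/8}$, and this is the point at which $M^{-2/3}$ stops dominating the first theorem. Since $p^{1/8}<p^{5/23}$, the three ranges listed in the corollary are exactly the three pieces of the resulting piecewise maximum.

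I do not anticipate any real obstacle: the only things to verify are the two elementary inequalities above and the observation that the $M^{-2/3}$ term never revives after the crossover at $M=p^{1/8}$ (which is immediate since $(M^4/p)^{1/6}$ and $(M^3/p)^{1/16}$ are increasing in $M$ while $M^{-2/3}$ is decreasing). The upper cutoff $M<p^{1/3}$ simply marks the range in which Theorem~\ref{thm:ConcentEllipticWiderRange} remains nontrivial, since there $(M^3/p)^{1/16}<1$. Assembling these pieces yields exactly the three-case formula in the statement.
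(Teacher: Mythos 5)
Your proposal is correct and takes precisely the route the paper intends: the paper introduces the corollary simply as ``the combination of Theorems~\ref{thm:ConcentElliptic} and~\ref{thm:ConcentEllipticWiderRange},'' and your bookkeeping supplies the elementary details (the rewriting with $M^{5/3}/p^{1/6}=M(M^4/p)^{1/6}$, the crossovers at $M=p^{1/8}$ and $M=p^{5/23}$, and the monotonicity observation ruling out any further revival of $M^{-2/3}$). Nothing is missing.
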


Our next result shows that when $\deg f\ge 4$ we also have a nontrivial bound for $I_{f}(M;R,S)$ in the range $M<p^{1/3-\varepsilon}$.

To formulate our result, we define $J_{k,m}(H)$ as the number of solutions of the
system of $m$ diophantine equations in $2k$ integral variables $x_1,\ldots,x_{2k}$:
\begin{equation}
\label{eq:System}
\begin{split}
x_1^m+\ldots +x_k^m&=x_{k+1}^m+\ldots x_{2k}^m,\\
&\ldots  \\x_1+\ldots +x_k&=x_{k+1}+\ldots x_{2k},\\
1\le x_1,&\ldots,x_{2k}\le H.
\end{split}
\end{equation}
We also define  $\kappa(m)$ to be the smallest integer $\kappa$ such
that for any integer $k \ge \kappa$ there exists a constant
$C(k,m)$ depending only on $k$ and $m$ and such that
\begin{equation}
\label{eq:bound J}
J_{k,m}(H)\le C(k,m) H^{2k-m(m+1)/2+o(1)},
\end{equation}
as   $H\to \infty$.
Note that by a recent result of Wooley~\cite[Theorem~1.1]{Wool2},
that improves the previous estimate of~\cite{Wool1},  we
have $\kappa(m) \le m^2-1$ for any $m \ge 3$.

\begin{thm}\label{thm:ConcentDegf>3}
Uniformly over all polynomials $f\in \F_p[X]$ of degree $\deg f =m\ge 4$
and $1\le M < p$, we have
$$
I_{f}(M;R,S) \le
M(M^3/p)^{1/2\kappa(m) +o(1)}+M^{1- (m-3)/2\kappa(m)+o(1)},
$$
as $M\to \infty$.
\end{thm}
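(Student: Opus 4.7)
The plan is to count $2k$-tuples of solutions whose $x$-coordinates satisfy the Vinogradov system of degrees $1,\ldots,m-1$ over $\Z$, and to bound this count in two ways. Set $k=\kappa(m)$. For notational simplicity assume $f$ is monic (the general case is identical, with $D$ below replaced by $c_m D$ for the leading coefficient $c_m$, which is a unit modulo $p$). Parametrize the solutions by shifting: $x=R+x'$ with $x'\in[1,M]$ and $y=(S+1)+y''$ with $y''\in[0,M-1]$. For each $2k$-tuple of solutions $((x_i,y_i))_{i=1}^{2k}$ introduce the discrepancies
\[
A=\sum_{i=1}^{k}y''_i-\sum_{i=k+1}^{2k}y''_i,\quad B=\sum_{i=1}^{k}(y''_i)^2-\sum_{i=k+1}^{2k}(y''_i)^2,\quad D=\sum_{i=1}^{k}(x'_i)^m-\sum_{i=k+1}^{2k}(x'_i)^m,
\]
of trivial ranges $|A|\le kM$, $|B|\le kM^2$, $|D|\le 2kM^m$. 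On restricting attention to $2k$-tuples with $\sum_{i\le k}(x'_i)^j=\sum_{i>k}(x'_i)^j$ for every $j=1,\ldots,m-1$, the identity $\sum y_i^2-\sum y_{k+i}^2\equiv \sum f(x_i)-\sum f(x_{k+i})\pmod p$ collapses to the single condition
\[
2(S+1)A+B\equiv D\pmod p.
\]
Let $T$ denote the count of such $2k$-tuples of solutions.

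For the \emph{lower} bound on $T$, classify $k$-tuples of solutions by the vector of power sums $\bigl(\sum_{i=1}^{k}(x'_i)^j\bigr)_{j=1}^{m-1}$; its coordinates take at most $kM^j+1$ values each, yielding $\ll M^{m(m-1)/2}$ distinct classes, so Cauchy--Schwarz gives $T\gg I^{2k}/M^{m(m-1)/2}$. For the \emph{upper} bound, decompose $T$ according to the triple $(A,B,D)$. The pair $(A,B)$ has $\ll M^3$ admissible values, and for each such pair the constraint forces $D$ into an arithmetic progression modulo $p$ within $[-2kM^m,2kM^m]$, contributing $\ll 1+M^m/p$ admissible values of $D$; hence the total number of admissible triples is $\ll M^3+M^{m+3}/p$. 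For each fixed admissible $(A,B,D)$, the number of corresponding $x'$-tuples is at most the shifted Vinogradov mean value $J_{k,m}(M;D)$, which in turn is at most $J_{k,m}(M)\ll M^{2k-m(m+1)/2+o(1)}$ by the standard Fourier invariance and~\eqref{eq:bound J}; incorporating the factor $2^{2k}$ accounting for the at most two admissible values of $y_i$ per $x_i$ yields
\[
T\ll\bigl(M^3+M^{m+3}/p\bigr)M^{2k-m(m+1)/2+o(1)}.
\]

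Combining the two bounds gives $I^{2k}\ll M^{2k+3-m+o(1)}+M^{2k+3+o(1)}/p$, and the theorem follows by extracting the $2k$-th root. The crux is the introduction of the shifted variables $y''$ and $x'$: these force $A,B,D$ into intervals of size depending only on $M$ (rather than on $R$ or $S$), so that the modular condition $2(S+1)A+B\equiv D\pmod p$ is genuinely restrictive. A secondary technical point is the invariance $J_{k,m}(M;D)\le J_{k,m}(M)$ for every shift $D$ (an immediate consequence of the Fourier representation of the mean value), which is what allows us to bound every contribution uniformly by~\eqref{eq:bound J}.
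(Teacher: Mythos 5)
Your argument is correct and follows essentially the same path as the paper: one Cauchy--Schwarz pigeonhole contributing a factor of order $M^3$ (your $(A,B)$ parameters from the shifted $y$'s, the paper's set $\cY_k$ of sums of $k$ shifted squares), another pigeonhole on the first $m-1$ power sums of the shifted $x$'s contributing $M^{m(m-1)/2}$, a mod-$p$ constraint on the $m$-th power sum contributing $1+M^m/p$, and finally Wooley's bound~\eqref{eq:bound J} via $J_{k,m}(M;D)\le J_{k,m}(M)$. The only real difference is organizational: you apply the two pigeonholes in the opposite order and re-derive inline the bound on $T_k(R;M)$ (the count of $2k$-tuples with $\sum f(x_i)\equiv\sum f(x_{k+i})\pmod p$) that the paper simply imports from~\cite{CGOS}, so your version is self-contained where the paper's is shorter by citation.
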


In particular, for any $\varepsilon>0$, there exists $\delta>0$ that depends only
on $\varepsilon$ and $\deg f$ such that if $M<p^{1/3-\varepsilon}$ and
$\deg f\ge 4$, then  $I_{f}(M;R,S)\ll M^{1-\delta}$.

Next, we turn to estimates on $N(H; \fB)$.
A simple observation shows that in the case of
hyperelliptic curves with $g\ge 2$ the quantity  $N(H; \fB)$ is
closely related to the problem of concentration of points of a
quadratic polynomial map. Then one can apply the general result
of~\cite{CGOS} and get a nontrivial upper bound for $N(H; \fB)$ for
any range of $M$. However, here we use a different approach and we
obtain a better bound. We  prove the following result, which,
besides of its application to bound the quantity $N(H; \fB)$, is of
independent interest.

\begin{thm}
\label{thm:Polynomial Map}
Let $f\in \F_p[X]$ be a polynomial of degree $m\ge 2$. Then for $1\le M < p$ the
number $J_f(M;R,S)$
of solutions to the congruence
$$
f(x) \equiv y\pmod p, \quad (x,y)\in [R+1, R+M]\times [S+1, S+M],
$$
is bounded by
$$
J_f(M;R,S) \ll \frac{M^2}{p} + M^{1-1/2^{m-1}}p^{o(1)}
$$
as $p\to \infty$.
\end{thm}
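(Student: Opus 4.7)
I would prove Theorem~\ref{thm:Polynomial Map} by induction on the degree $m\ge 2$ of $f$. For the base case $m=2$, one completes the square to reduce to counting $(X,Y)$ with $Y\equiv X^2\pmod p$ in a square of side $O(M)$; the bound $M^2/p + M^{1/2}p^{o(1)}$ then follows by combining a direct argument for $M\le p^{1/2}$ (where $X^2$ does not wrap around modulo $p$, so that $X^2\in [S+1,S+M]$ forces $X$ into $O(1)$ intervals each of length $O(M^{1/2})$) with Fourier inversion together with the completed Gauss sum bound $|\sum_{x\in[R+1,R+M]}\ep(tx^2)|\ll p^{1/2+o(1)}$ and equidistribution for $M>p^{1/2}$; this is essentially contained in \cite{CGOS,CillGar}.

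For the inductive step $m-1\to m$ with $m\ge 3$, the key tool is Cauchy--Schwarz with differencing. Writing $J^2$ as the number of pairs $(x_1,x_2)\in [R+1,R+M]^2$ with $f(x_i)\bmod p\in [S+1,S+M]$ and parameterizing by $h=x_1-x_2$, the polynomial $g_h(x):=f(x+h)-f(x)$ has degree exactly $m-1$ for $0<|h|<M$ (its leading coefficient $m\,a_m h$ is nonzero mod $p$ since $p\nmid a_m$ and $0<|h|<M<p$). Bounding the pairs with difference $h$ by the quantity $J_{g_h}$ in a box of side $O(M)$ and applying the induction hypothesis yields
\begin{equation*}
J^2\ll J + M\left(\frac{M^2}{p} + M^{1-1/2^{m-2}}p^{o(1)}\right),
\end{equation*}
and hence $J\ll (M^3/p)^{1/2} + M^{1-1/2^{m-1}}p^{o(1)}$.

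The main obstacle is that the auxiliary term $(M^3/p)^{1/2}$ exceeds $M^2/p$ whenever $M<p$, so the combinatorial bound alone does not reach the claim in the full range. I would close the gap by an independent Fourier/Weyl estimate for larger $M$: writing
\begin{equation*}
J = \frac{M^2}{p} + O\!\left(\log p\cdot\max_{1\le t\le p-1}|S_t|\right),\qquad S_t=\sum_{x\in [R+1,R+M]}\ep(tf(x)),
\end{equation*}
and applying Weyl's inequality---iterating Cauchy--Schwarz with differencing $m-1$ times to reduce $S_t$ to a linear geometric sum in $x$ with coefficient proportional to $t\,m!\,a_m\,h_1\cdots h_{m-1}$---one obtains $|S_t|\ll M^{1-1/2^{m-1}}p^{o(1)}$ for $t\not\equiv 0\pmod p$, valid in the range $M\ge p^{1/(m-1)}$. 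A quick check shows $p^{1/(m-1)}\le p^{2^{m-2}/(2^{m-2}+1)}$ for $m\ge 3$, and in the complementary range $M\le p^{2^{m-2}/(2^{m-2}+1)}$ one verifies $(M^3/p)^{1/2}\le M^{1-1/2^{m-1}}$, so the combinatorial bound from the induction already matches the claim. Patching the two regimes together yields $J_f(M;R,S)\ll M^2/p + M^{1-1/2^{m-1}}p^{o(1)}$ for all $1\le M<p$.
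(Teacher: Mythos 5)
Your inductive scheme is a genuinely different route from the paper's (which runs a single Erd\H{o}s--Tur\'an/Weyl argument, averaged over the frequency $k$ via H\"older and a divisor bound, uniformly in $m\ge 2$ and $1\le M<p$), and the inductive step together with the patching between the combinatorial bound and the Weyl bound is sound for $m\ge 3$: differencing over $h=x_1-x_2$ does give $J^2\ll J+M^3/p+M^{2-1/2^{m-2}}p^{o(1)}$, the pointwise Weyl estimate with $q=p$ gives $|S_t|\ll M^{1-1/2^{m-1}}p^{o(1)}$ once $M\ge p^{1/(m-1)}$, and the two ranges do overlap since $(m-2)2^{m-2}\ge 1$.

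The gap is in the base case $m=2$, and it is not cosmetic. Completing the square does not reduce the problem to a count of $(X,Y)$ with $Y\equiv X^2\pmod p$ in a \emph{box} of side $O(M)$: the substitution $X=2a_2x+a_1$, $Y=4a_2y-D$ sends the box $[R+1,R+M]\times[S+1,S+M]$ to a product of two arithmetic progressions modulo $p$ with common differences $2a_2$ and $4a_2$, not to a box, so the ``no wrap-around for $M\le p^{1/2}$'' heuristic does not apply. For $M>p^{1/2}$, the incomplete Gauss-sum bound gives $|S_t|\ll p^{1/2}\log p$, hence $J\ll M^2/p+p^{1/2+o(1)}$, which is \emph{weaker} than the required $M^2/p+M^{1/2}p^{o(1)}$ throughout the range $p^{1/2}<M<p^{3/4}$. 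The cited results of~\cite{CGOS,CillGar} do not supply the claimed base case either: for $m=2$ and $\kappa(2)=3$, \cite{CGOS} only yields $J\ll M(M/p)^{1/6+o(1)}+M^{5/6+o(1)}$, which is far from $M^{1/2}p^{o(1)}$; and your own Weyl route needs $M\ge p^{1/(m-1)}=p$, i.e.\ never applies when $m=2$. So the induction starts from an unproved statement. The fix is exactly what the paper does for all $m$ simultaneously: after the Fourier inversion, do not bound $\max_t|S_t|$ pointwise but average over $t$ (equivalently $k$) using H\"older, collapse $m!\,k\ell_1\cdots\ell_{m-1}$ to a single variable $z$ with a divisor-function loss $p^{o(1)}$, and bound $\sum_z\min\{M,\|az/p\|^{-1}\}$ by completeness; for $m=2$ this is just a Cauchy--Schwarz over $k$ plus a divisor bound, and it gives $J\ll M^2/p+M^{1/2}p^{o(1)}$ directly, after which your induction (or simply the same averaged argument for all $m$) goes through.
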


We remark that in~\cite{CGOS}, the bound
$$
J_f(M;R,S) \ll  M (M/p)^{1/2\kappa(m)+o(1)} + M^{1- (m-1)/2\kappa(m)+o(1)}
$$
has been given which is stronger than Theorem~\ref{thm:Polynomial Map} for
large values of $m$.
Also in~\cite{CGOS} for  $M \le p^{2/(m^2+3)}$,    the bound
\begin{equation}
\label{eq:J small M}
J_f(M;R,S) \ll   M^{1/m+o(1)}
\end{equation}
has been obtained.

\subsection{Isomorphism classes of hyperelliptic curves in thin families}

Using~\eqref{eq:isom},
we derive  from Theorem~\ref{thm:Polynomial Map} and~\eqref{eq:J small M}
the following consequence

\begin{cor}
\label{cor:HyperellipticAnyRange}
For any hyperelliptic curve $H$ of genus $g\ge 2$ over $\F_p$ and
a cube $\fB$ given by~\eqref{eq:cube B} with $1\le M <p$, we have
$$
N(H;\fB) \ll \frac{M^2}{p} + M^{1/2+o(1)}.
$$
\end{cor}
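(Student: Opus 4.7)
The plan is to project the isomorphism relation~\eqref{eq:isom} onto two well-chosen coordinates and reduce the count to an instance of Theorem~\ref{thm:Polynomial Map}. The key observation is that, for $g\ge 2$, one can select two indices whose exponents, after the substitution $\beta=\alpha^2$, are exactly $2$ and $4$; eliminating $\beta$ between them produces a genuine polynomial congruence $y\equiv f(x)\pmod p$ of degree two, exactly the shape to which Theorem~\ref{thm:Polynomial Map} applies.

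Concretely, I would write $H=\Hb$ with $\vb=(b_0,\ldots,b_{2g-1})$ and, setting $\beta=\alpha^2$, rewrite~\eqref{eq:isom} as $a_i\equiv\beta^{2g+1-i}b_i\pmod p$. Since every coordinate of a vector in $\fB$ is nonzero modulo $p$, if some $b_i$ vanishes then $N(H;\fB)=0$ trivially; otherwise I would take $i=2g-1$ and $i=2g-3$ (both legal because $g\ge 2$) to obtain
$$a_{2g-3} \equiv c\,a_{2g-1}^2 \pmod p, \qquad c:=b_{2g-3}/b_{2g-1}^2 \in \F_p^*.$$
The projection $\va\mapsto(a_{2g-1},a_{2g-3})$ is at most two-to-one on the set counted by $N(H;\fB)$: the pair determines $\beta^2=a_{2g-1}/b_{2g-1}$, hence $\beta$ up to sign, and each $\beta$ in turn fixes $\va$ uniquely. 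Setting $f(X)=cX^2\in\F_p[X]$, this gives
$$N(H;\fB) \le 2\,J_f(M;R_{2g-1},R_{2g-3}),$$
and it remains to bound $J_f$ with $m=\deg f=2$.

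The only real subtlety—and the step I expect to require the most care—is absorbing the factor $p^{o(1)}$ appearing in Theorem~\ref{thm:Polynomial Map} into the $M^{o(1)}$ demanded by the corollary. When $M\ge p^{2/7}$ this is automatic, since then $p\le M^{7/2}$ and hence $p^{o(1)}\le M^{o(1)}$, so Theorem~\ref{thm:Polynomial Map} with $m=2$ directly yields $J_f\ll M^2/p+M^{1/2+o(1)}$. For smaller $M\le p^{2/7}$, however, one must instead invoke~\eqref{eq:J small M}, which produces $J_f\ll M^{1/2+o(1)}$ without any factor depending on $p$. Splitting into these two regimes and combining the resulting estimates completes the proof; beyond this bookkeeping there is no genuine obstacle.
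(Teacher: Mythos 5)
Your proposal is correct and follows essentially the same route as the paper: project onto the coordinates $a_{2g-1}$ and $a_{2g-3}$, observe that $\va$ is recovered from these up to a bounded factor via $\beta=\alpha^2$, reduce to a quadratic congruence of the form $y\equiv f(x)\pmod p$, and apply Theorem~\ref{thm:Polynomial Map} with $m=2$; the paper phrases the reduction through the intermediate relation $a_{2g-1}^4\equiv\lambda a_{2g-3}^2\pmod p$ and then extracts $x^2\equiv\mu y\pmod p$, which is the same congruence you obtain directly. Your treatment of the $p^{o(1)}$ versus $M^{o(1)}$ discrepancy (splitting at $M=p^{2/7}$ and invoking~\eqref{eq:J small M} in the small-$M$ regime) is in fact more careful than the paper, which simply states that Theorem~\ref{thm:Polynomial Map} ``immediately'' gives the bound without addressing this point.
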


Furthermore, as we have mentioned above,
when $g=1$ the  problem of estimating $N(H;\fB)$ is equivalent to estimating
the concentration
of points on certain curves of degree $3$ (which are singular and thus are
not elliptic curves) and
Theorem~\ref{thm:ConcentElliptic} applies in this case. Using the idea
of the proof of Theorem~\ref{thm:ConcentElliptic}, we  establish
the following result which is valid for any hyperelliptic curve.

\begin{thm}
\label{thm:HyperellipticOptimal}
For any hyperelliptic curve $H$ of genus $g\ge 1$ over $\F_p$, any
 cube $\fB$ given by~\eqref{eq:cube B} with $1\le M < p$  and any odd integer $h\in [3, 2g+1]$,
we have
$$
N(H;\fB)<
\(M^{1/h} +M\(M^4/p\)^{2/h(h+1)}\)M^{o(1)},
$$
as $M \to \infty$.
\end{thm}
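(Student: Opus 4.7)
The plan is a two-step reduction: first reduce $N(H;\fB)$ to counting affine points on a curve of the shape $Y^2\equiv cX^h\pmod p$ inside a square of side $M$, then bound that count by adapting the argument behind Theorem~\ref{thm:ConcentElliptic} from cubics to general odd $h$.

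For the reduction, write $H=\Hb$ with $\vb=(b_0,\ldots,b_{2g-1})$. By~\eqref{eq:isom}, any $\va\in\fB$ with $\Ha\sim H$ comes from a unique $\alpha\in\F_p^*$ satisfying $a_i\equiv\alpha^{2(2g+1-i)}b_i\pmod p$ for every $i$. Given an odd $h\in[3,2g+1]$, set $j:=2g+1-h\in\{0,1,\ldots,2g-2\}$. The coordinates $i=2g-1$ and $i=j$ give $a_{2g-1}\equiv\alpha^4 b_{2g-1}\pmod p$ and $a_j\equiv\alpha^{2h}b_j\pmod p$; eliminating $\alpha$ via $(\alpha^4)^h=(\alpha^{2h})^2$ yields
$$
a_j^{\,2}\,b_{2g-1}^{h}\equiv a_{2g-1}^{h}\,b_j^{\,2}\pmod p,
$$
so $(x,y):=(a_{2g-1},a_j)$ satisfies $y^2\equiv cx^h\pmod p$ with $c\equiv b_j^{\,2}b_{2g-1}^{-h}\pmod p$ and $(x,y)\in[R_{2g-1}+1,R_{2g-1}+M]\times[R_j+1,R_j+M]$. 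Because $\gcd(4,2h)=2$ (using that $h$ is odd), the pair $(\alpha^4,\alpha^{2h})$ recovers $\alpha^2$, hence all of $\va$; so $\va\mapsto(x,y)$ is injective, and $N(H;\fB)\le I_{cX^h}(M;R_{2g-1},R_j)$.

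It remains to estimate $I_{cX^h}(M;R_{2g-1},R_j)$. For $h=3$ this is precisely Theorem~\ref{thm:ConcentElliptic} applied to $f(X)=cX^3$, and one checks that its two terms agree with those of the target bound at $h=3$. For $h\ge5$ one adapts its proof. The curve $Y^2=cX^h$ is absolutely irreducible of degree $h$ (as $cX^h$ is not a square in $\F_p[X]$ when $h$ is odd) and rational: it is parametrised by $t\mapsto(c^{-1}t^2,\,c^{-(h-1)/2}t^h)$, whose inverse $t=y/x^{(h-1)/2}$ is well defined because $\gcd(2,h)=1$. Thus $I_{cX^h}(M;R_{2g-1},R_j)$ equals the number of $t\in\F_p^*$ for which $t^2\pmod p$ and $t^h\pmod p$ simultaneously fall into two prescribed intervals of length $M$. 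The two regimes are then expected to come out as in the cubic case: the term $M^{1/h+o(1)}$ from a B\'ezout/divisor-type bound on $h$-tuples of such $t$'s, and the term $M(M^4/p)^{2/h(h+1)+o(1)}$ from an $(h+1)$-fold moment combined with completing the interval conditions into character sums and invoking Weil-type bounds. The principal obstacle is precisely this last step: extending the ``triples and quadruples'' calculations underlying Theorem~\ref{thm:ConcentElliptic} to ``$h$-tuples and $(h+1)$-tuples'' and checking that the character-sum saving comes out to exactly $2/h(h+1)$; the reduction itself is elementary once one sees which two coordinates of $\va$ to single out.
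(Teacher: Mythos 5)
Your reduction step is correct and matches the paper: you single out coordinates $i=2g-1$ and $i=2g+1-h$ of~\eqref{eq:isom}, eliminate $\alpha$ to get $a_j^2b_{2g-1}^h\equiv a_{2g-1}^hb_j^2\pmod p$ with $j=2g+1-h$, and use $\gcd(4,2h)=2$ (from $h$ odd) to recover $\alpha^2$, hence injectivity; this reduces $N(H;\fB)$ to counting solutions of $y^2\equiv\lambda x^h\pmod p$ in a box. You also correctly observe that for $h=3$ the claimed bound is exactly Theorem~\ref{thm:ConcentElliptic} applied to $f(X)=\lambda X^3$, which is a legitimate shortcut (the paper instead reruns the argument in full generality).

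The genuine gap is the case $h\ge5$, which you do not actually prove. You switch to a route the paper does not take --- parametrising $Y^2=\lambda X^h$ rationally and counting $t$ such that $t^2$ and $t^h$ land in prescribed intervals, with the exponent $2/h(h+1)$ to come from ``$(h+1)$-fold moments, character sums and Weil-type bounds'' --- and you yourself flag that this last step is an obstacle. It is more than an obstacle: character-sum techniques carry a $p^{1/2}$ error term that will not deliver an exponent like $(M^4/p)^{2/h(h+1)}$ in the small-$M$ regime where the theorem is interesting, and the paper's proof uses no exponential or character sums at all. What the paper actually does is rerun the \emph{proof mechanism} of Theorem~\ref{thm:ConcentElliptic} at degree $h$: pigeonhole in $x$ into $L$ subintervals and then into short intervals of length $O(hM/T)$ to collect $2h+1$ solutions with distinct $x$-coordinates after a translation; use Lemma~\ref{lem:deter} (which requires precisely $2\nmid h$, i.e.~$h$ odd) to extract $h+1$ of them giving a nonvanishing $(h+1)\times(h+1)$ determinant $\Delta$; solve for $(c_h,\ldots,c_0)$ by Cramer's rule to replace the unknown coefficients by small integers $\Delta_j$; lift the congruence to an integer equation $\Delta_1x^h+\cdots+\Delta_{h+1}y-\Delta y^2=pz$ with $|z|\ll M^{h(h+1)/2+3}/(pT^{h(h-1)/2}L^h)$; and then apply Bombieri--Pila (Lemma~\ref{lem:BombPila}) to each absolutely irreducible degree-$h$ curve to obtain $T/L\le M^{1/h+o(1)}(1+|z|_{\max})$. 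Optimising $L\asymp(M^{h(h+1)/2+3}/p)^{2/h(h+1)}$ then yields both terms of the stated bound, with the exponent $2/h(h+1)$ coming from this $L$-optimisation, not from any character sum. To complete your proof for $h\ge5$ you would need to carry out this determinant/Bombieri--Pila scheme at degree $h$, not the parametrisation-plus-characters sketch.
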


We observe that if $M<p^{1/(2g^2+2g+4)}$ then, taking $h=2g+1$ in
Theorem~\ref{thm:HyperellipticOptimal},  we obtain the estimate
$N(H;\fB)\le M^{1/(2g+1)+o(1)}$ which, as we have seen,
is sharp up to the $o(1)$ term.

Let $\cH\(\fB\)$ be a collection of representatives of all
isomorphism classes of hyperelliptic curves $\Ha$, $\va \in \fB$,
where $\fB$ is a $2g$-dimensional cube of side length $M$.
In~\cite{CSZ} the lower bound $\# \cH\(\fB\)\gg
\min\{p,M^{2+o(1)}\}$ has been obtained for elliptic curves (that
is, for $g=1$). We extend this result to $g\ge 2$. Certainly the
upper bounds of our theorems lead to a lower bound on $\#
\cH\(\fB\)$. However, using a different approach we obtain a
near optimal bound for $\# \cH\(\fB\)$.

\begin{thm}\label{thm:main3} For $g\ge 1$ and any
 cube $\fB$ given by~\eqref{eq:cube B} with and $1\le M < p$,  we have
$$\# \cH\(\fB\)\gg \min\{p^{2g-1},M^{2g+o(1)}\},$$
as $M \to \infty$.
Furthermore, if $g\ge 2$ the $o(1)$ term can be removed when
$M>p^{1/(2g)}$.
\end{thm}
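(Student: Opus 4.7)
The plan is to apply the Cauchy--Schwarz inequality
\[
\#\cH(\fB)\ \ge\ \frac{|\fB|^{2}}{\sum_{H}N(H;\fB)^{2}}\ =\ \frac{M^{4g}}{\sum_{H}N(H;\fB)^{2}},
\]
and to bound the second moment on the right-hand side. The quantity $\sum_{H}N(H;\fB)^{2}$ counts ordered pairs $(\va,\vb)\in\fB^{2}$ with $\Ha\sim\Hb$, which by \eqref{eq:isom} decouples across coordinates: for each fixed isomorphism parameter $\alpha\in\F_{p}^{*}$, the number of admissible $\va$ is
\[
Z(\alpha)\ =\ \prod_{i=0}^{2g-1}N_{i}(\alpha^{e_{i}}),\qquad e_{i}=4g+2-2i,
\]
where $N_{i}(d)=\#\{a\in I_{i}\colon da\bmod p\in I_{i}\}$ for $I_{i}=[R_{i}+1,R_{i}+M]$. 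The diagonal $\alpha=\pm1$ contributes exactly $2M^{2g}$, which already matches the target order of magnitude.

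For the off-diagonal I would apply H\"older's inequality with $2g$ equal exponents to the sum $\sum_{\alpha\neq\pm1}Z(\alpha)$, reducing it to a product of $(2g)$-th multiplicative energies
\[
E_{\times}^{(2g)}(I_{i})\ =\ \sum_{d\in\F_{p}^{*}}N_{i}(d)^{2g}.
\]
Since the power map $\alpha\mapsto\alpha^{e_{i}}$ is $\gcd(e_{i},p-1)$-to-one on its image and $e_{i}\le 4g+2$, the reduction costs only a constant depending on $g$. It then remains to prove the two-term bound
\[
E_{\times}^{(2g)}(I)\ \ll\ M^{2g+o(1)}\ +\ \frac{M^{4g}}{p^{2g-1}}
\]
for every interval $I\subset\F_{p}$ of length $M$. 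Combining these estimates and inverting, one obtains $\sum_{H}N(H;\fB)^{2}\ll M^{2g+o(1)}+M^{4g}/p^{2g-1}$ and consequently
\[
\#\cH(\fB)\ \gg\ \min\bigl\{M^{2g-o(1)},\,p^{2g-1}\bigr\},
\]
which is the main assertion.

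For the sharpening to a genuine $M^{2g}$ when $g\ge 2$ and $M>p^{1/(2g)}$, I would observe that the $M^{o(1)}$ losses originate solely from the logarithmic divergence of the series $\sum_{n}\phi(n)/n^{2g}$ at $2g=2$ (the case $g=1$). For $g\ge 2$ this series converges absolutely, sharpening the energy bound to $E_{\times}^{(2g)}(I)=O_{g}(M^{2g})$ with no subpolynomial factor, and the hypothesis $M>p^{1/(2g)}$ merely guarantees that the lower bound surviving Cauchy--Schwarz exceeds the implicit constant.

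The principal obstacle will be the two-term estimate for $E_{\times}^{(2g)}(I)$ in the wrap-around regime $M^{2}>p$. One must cleanly separate the \emph{arithmetic} contribution---tuples $(b_{j},c_{j})$ with common ratio equal to a rational of small height, handled by direct enumeration and divisor estimates to give the term $M^{2g+o(1)}$---from the \emph{analytic} contribution in which the congruence $c_{j}b_{1}\equiv c_{1}b_{j}\pmod p$ is satisfied only after non-trivial reduction modulo $p$, handled via equidistribution / pigeonhole on the mean value $M^{2}/p$ of $N_{i}(d)$ and giving the term $M^{4g}/p^{2g-1}$. This is the only non-routine input in the argument; everything else is an orchestrated combination of Cauchy--Schwarz, H\"older, and the explicit group action \eqref{eq:isom}.
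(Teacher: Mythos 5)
Your plan opens exactly as the paper does: Cauchy--Schwarz gives $\#\cH(\fB)\ge M^{4g}/T(\fB)$ with $T(\fB)=\sum_H N(H;\fB)^2$, you then identify $T(\fB)=\sum_{\alpha}\prod_i N_i(\alpha^{e_i})$, apply H\"older with $2g$ equal exponents, and absorb the cost of the power maps $\alpha\mapsto\alpha^{e_i}$ via their bounded kernels to reduce to $2g$-th multiplicative energies $\sum_\alpha N_i^{2g}(\alpha)$ of intervals of length $M$. For $M\ge p^{1/(2g)}$ this is precisely the paper's second argument: rational approximation $\alpha\equiv r/s$ with $|r|,s\le p^{1/2}$ gives $N_i(r/s)\le(1+2M/\max\{r,s\})(1+2M(r+s)/p)$, and summing yields $T(\fB)\ll p+M^{2g}/p^{g-1}+M^{2g}\sum_{s}s^{1-2g}+M^{4g}/p^{2g-1}$; your remark that the $o(1)$-loss comes only from $\sum_s s^{1-2g}$, which diverges like $\log M$ at $g=1$ but converges for $g\ge2$, is exactly right.

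The gap is in the range $M<p^{1/(2g)}$. You assert the two-term bound $\sum_\alpha N_i^{2g}(\alpha)\ll M^{2g+o(1)}+M^{4g}/p^{2g-1}$, but the Farey computation above produces an unavoidable $+p$ term: the bound $N_i(r/s)\le(1+2M/\max\{r,s\})(1+2M(r+s)/p)$ is $\ge 1$ for every one of the $\asymp p$ coprime pairs $(r,s)$, including the vast majority with $N_i(r/s)=0$, and this $p$ dominates $M^{2g}$ precisely when $M<p^{1/(2g)}$. Removing it (i.e.\ restricting the sum to those $\alpha$ actually represented as a ratio of two elements of $I$ and controlling their count and heights) is not routine, and you do not prove it; your closing paragraph flags a difficulty, but places it in the wrap-around regime $M^2>p$ rather than in the small-$M$ regime where it actually wrecks the argument. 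The paper sidesteps the issue entirely for $M<p^{1/(2g)}$ with a different mechanism: from the decoupled relations it extracts the single quadruple $(a_{2g-1},a_{2g-2},b_{2g-1},b_{2g-2})$, which satisfies $a_{2g-1}^3 b_{2g-2}^2\equiv a_{2g-2}^2 b_{2g-1}^3\pmod p$, invokes~\cite[Theorem~7]{CSZ} to bound the number of such quadruples by $O(M^4/p+M^{2+o(1)})$, and then observes that each quadruple fixes $\alpha$ up to $O(1)$ choices and hence the remaining $b_j$'s from the $a_j$'s, giving $T(\fB)\le M^{2g-2}(M^4/p+M^{2+o(1)})$ unconditionally. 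So to complete your proof you must either supply a genuine argument for the refined energy bound (without the $+p$), or adopt the paper's quadruple argument for small $M$; as written, your approach yields only $T(\fB)\ll p$ in that range, which is weaker than $M^{2g+o(1)}$.
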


\subsection{Diameter of polynomial dynamical systems}

We  notice that results about concentration of points on curves are closely
related to the question about the diameter of partial trajectories
of polynomial dynamical systems. Namely, given a polynomial $f \in \F_p[X]$
and an element
$u_0 \in \F_p$,
we consider the sequence of elements
of $\F_p$ generated by iterations $u_n = f(u_{n-1})$, $n = 0,1, \ldots$.
Clearly the sequence $u_n$ is eventually periodic. In particular,
let $T_{f, u_0}$ be the full trajectory length, that is, the smallest integer
$t$ such that $u_t = u_s$ for some $s < t$.
The study of the diameter
$$
D_{f, u_0}(N) = \max_{0 \le k,m\le N-1} |u_k - u_m|
$$
has been  initiated in~\cite{GuShp} and then continued in~\cite{Chang,CGOS}.
In particular, it follows from~\cite[Theorem~6]{GuShp}
that for any fixed $\varepsilon$, for $T_{f, u_0} \ge N \ge p^{1/2+\varepsilon}$
we have the asymptotically best possible bound
$$
D_{f, u_0}(N) = p^{1+o(1)}
$$
as $p\to \infty$.
For smaller values of $N$ a series of lower bounds
on $D_{f, u_0}(N)$ is given in~\cite{Chang,CGOS}.

One easily derives from Theorem~\ref{thm:Polynomial Map}
the following result, which improves previous results to intermediate
values of $N$ (and is especially effective for small values of $m$).

\begin{cor}
\label{cor:DiamDynSyst}
For any  polynomial $f \in \F_p[X]$ of degree $m\ge 2$ and positive
integer $N\le T_{f, u_0}$,  we have
$$
D_{f, u_0}(N) \gg \min\{N^{1/2} p^{1/2}, N^{1 + 1/(2^{m-1} -1)}
p^{o(1)}\},
$$
as $p \to \infty$.
\end{cor}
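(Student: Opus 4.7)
The plan is to apply Theorem~\ref{thm:Polynomial Map} directly to the consecutive-pair structure of the orbit. Set $D = D_{f, u_0}(N)$. Since $N \le T_{f, u_0}$, the iterates $u_0, u_1, \ldots, u_{N-1}$ are pairwise distinct, and by definition of $D$ they all lie in some interval $[R+1, R+D]$.

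Next, I would consider the $N-1$ pairs $(u_n, u_{n+1})$ for $n = 0, 1, \ldots, N-2$. Since $u_{n+1} \equiv f(u_n) \pmod p$, each such pair is a solution of the congruence $y \equiv f(x) \pmod p$, both coordinates lie in $[R+1, R+D]$, and the pairs are pairwise distinct because their first coordinates $u_n$ are. Applying Theorem~\ref{thm:Polynomial Map} to the square $[R+1, R+D] \times [R+1, R+D]$ therefore yields
$$
N - 1 \le J_f(D; R, R) \ll \frac{D^2}{p} + D^{1 - 1/2^{m-1}} p^{o(1)}.
$$

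From this, at least one of the two terms on the right is $\gg N$. In the first case we obtain $D \gg N^{1/2} p^{1/2}$; in the second case, solving for $D$ gives $D \gg N^{2^{m-1}/(2^{m-1}-1)} p^{o(1)} = N^{1 + 1/(2^{m-1}-1)} p^{o(1)}$. Taking the smaller of these two lower bounds delivers the claimed estimate. There is no real obstacle here: the argument is a direct reduction, and the only mild subtlety is the pairwise distinctness of the pairs, which is guaranteed by the hypothesis $N \le T_{f, u_0}$.
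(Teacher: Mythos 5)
Your proof is correct and is exactly the derivation the paper intends (the paper simply remarks that the corollary "easily" follows from Theorem~\ref{thm:Polynomial Map}): the $N$ distinct iterates lie in a box of side $\approx D_{f,u_0}(N)$, the $N-1$ consecutive pairs $(u_n,u_{n+1})$ are distinct solutions of $y\equiv f(x)\pmod p$ in that box, and inverting the bound $N-1\ll D^2/p + D^{1-1/2^{m-1}}p^{o(1)}$ gives the stated dichotomy. No gap.
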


On the other hand, we remark that our method and results do  not affect the superpolynomial lower
bounds of~\cite{Chang} that hold for small values of $N$.

\section{Preparations}

\subsection{Uniform distribution and exponential sums}

The following result is well-known and can be found, for example, in~\cite[Chapter~1, Theorem~1]{Mont}
(which a  more precise form of the celebrated Erd{\H o}s--Tur\'{a}n inequality).

\begin{lem}
\label{lem:ET small int}
Let $\gamma_1, \ldots, \gamma_M$ be a sequence of $M$ points of the unit interval $[0,1]$.
Then for any integer $K\ge 1$, and an interval $[\alpha, \beta] \subseteq [0,1]$,
we have
\begin{equation*}
\begin{split}
\# \{n =1, \ldots, M~:&~\gamma_n  \in [\alpha, \beta]\} - M(\beta - \alpha)\\
\ll \frac{M}{K} + &\sum_{k=1}^K \(\frac{1}{K} +\min\{\beta - \alpha, 1/k\}\)
\left|\sum_{n=1}^M \exp(2 \pi i k \gamma_n)\right|.
\end{split}
\end{equation*}
\end{lem}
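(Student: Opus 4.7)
The plan is to prove this via the standard Beurling--Selberg (or Vaaler) majorant/minorant approach to the Erd{\H o}s--Tur\'an inequality, which is precisely the method used in Montgomery~\cite{Mont}. The key is to sandwich the indicator function $\chi_{[\alpha,\beta]}$ between trigonometric polynomials of bounded degree whose Fourier coefficients decay in a way that matches the weight $\min\{\beta-\alpha, 1/k\}$ appearing on the right-hand side.

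The first step is to construct trigonometric polynomials $V^\pm_K$ of degree at most $K$ on $\R/\Z$ satisfying
\begin{equation*}
V^-_K(x) \le \chi_{[\alpha,\beta]}(x) \le V^+_K(x) \quad \text{for all } x \in [0,1],
\end{equation*}
with the two quantitative properties
\begin{equation*}
\int_0^1 V^\pm_K(x)\, dx = (\beta - \alpha) + O(1/K),
\end{equation*}
and, writing $V^\pm_K(x) = \sum_{|k|\le K} \widehat{V^\pm_K}(k)\, e^{2\pi i k x}$,
\begin{equation*}
\bigl|\widehat{V^\pm_K}(k)\bigr| \ll \frac{1}{K} + \min\{\beta - \alpha,\, 1/|k|\} \quad \text{for } 1 \le |k| \le K.
\end{equation*}
The existence of such polynomials is classical; one obtains them from Selberg's majorants for an interval, using Vaaler's explicit construction based on the Beurling function. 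This is the only ``heavy'' ingredient, and it is where the specific weight $\min\{\beta-\alpha,1/k\}$ (rather than the cruder $1/k$ of the usual discrepancy form) enters.

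The second step is to evaluate $\sum_{n=1}^M V^\pm_K(\gamma_n)$ in two ways. On the one hand, by the sandwiching property,
\begin{equation*}
\sum_{n=1}^M V^-_K(\gamma_n) \le \#\{n \le M : \gamma_n \in [\alpha,\beta]\} \le \sum_{n=1}^M V^+_K(\gamma_n).
\end{equation*}
On the other hand, expanding the Fourier series and exchanging summation,
\begin{equation*}
\sum_{n=1}^M V^\pm_K(\gamma_n) = M\,\widehat{V^\pm_K}(0) + \sum_{1\le |k|\le K} \widehat{V^\pm_K}(k) \sum_{n=1}^M e(k\gamma_n),
\end{equation*}
where $\widehat{V^\pm_K}(0) = (\beta-\alpha) + O(1/K)$ by the integral estimate. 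Subtracting $M(\beta-\alpha)$ from both bounds, the contribution of the zero-th coefficient becomes $O(M/K)$, and the nonzero frequencies are bounded termwise using the Fourier coefficient estimate together with $|\sum_n e(-k\gamma_n)| = |\sum_n e(k\gamma_n)|$, which collapses the symmetric sum over $|k|\le K$ to the one-sided sum $\sum_{k=1}^K$ stated in the lemma.

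The main obstacle is Step~1: writing down an explicit pair of trigonometric polynomials with simultaneously good $L^\infty$ control (for the sandwich), good integral (for the main term), and the refined $\min\{\beta-\alpha,1/k\}$ decay on the Fourier side. Once Vaaler's polynomials are invoked, Steps~2 and~3 are routine Fourier manipulation.
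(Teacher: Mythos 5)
Your proposal is correct and is exactly the standard Beurling--Selberg/Vaaler approach; the paper does not prove this lemma itself but cites Montgomery's \emph{Ten Lectures} (Chapter~1, Theorem~1), and your argument is precisely the one Montgomery gives there. The Fourier-coefficient bound $|\widehat{V^\pm_K}(k)|\ll 1/K+\min\{\beta-\alpha,1/|k|\}$ you invoke is the property of Vaaler's Selberg polynomials that produces the refined weight in the statement, and the rest of your sandwich-and-expand argument closes the proof as you describe.
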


To use Lemma~\ref{lem:ET small int} we also need an estimate on exponential sums
with polynomials,  which is essentially due to Weyl, see~\cite[Proposition~8.2]{IwKow}.

Let $\|\xi\| = \min\{|\xi - k|~:~k\in \Z\}$ denote the distance between
a real $\xi$ and the closest integer.

\begin{lem}
\label{lem:Weyl}
Let $f(X) \in \R[X]$ be a polynomial of degree $m\ge 2$ with the leading coefficient
$\vartheta \ne 0$.
Then
\begin{equation*}
\begin{split}
&\left|\sum_{n=1}^M \exp(2 \pi i f(n))\right|\\
&\quad \ll M^{1-m/2^{m-1}}  \(\sum_{-M < \ell_1 , \ldots,  \ell_{m-1}  < M}
\min\{M, \|\vartheta m!  \ell_1   \ldots   \ell_{m-1}\|^{-1}\}\)^{2^{1-m}}.
\end{split}
\end{equation*}
\end{lem}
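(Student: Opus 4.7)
The plan is to prove this by the classical method of iterated Weyl differencing, which reduces the degree of the phase polynomial by one at each step and eventually produces a linear exponential sum that can be summed trivially. Write $S = \sum_{n=1}^M \exp(2\pi i f(n))$, and for each integer $h$ set $\Delta_h f(x) = f(x+h)-f(x)$. A straightforward induction on $k$ verifies that $\Delta_{h_1}\cdots\Delta_{h_k} f$ is a polynomial in $x$ of degree $m-k$ with leading coefficient $\tfrac{m!}{(m-k)!}\,\vartheta\, h_1\cdots h_k$; in particular, for $k = m-1$ it is a linear polynomial in $n$ whose leading coefficient is $m!\,\vartheta\, h_1\cdots h_{m-1}$.

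The first squaring step gives
\begin{equation*}
|S|^2 = \sum_{n_1,n_2=1}^M \exp\bigl(2\pi i(f(n_1)-f(n_2))\bigr) = \sum_{|h|<M}\, \sum_{n\in I_h} \exp\bigl(2\pi i \Delta_h f(n)\bigr),
\end{equation*}
where each $I_h$ is an interval of length at most $M$. I would then iterate: bound the outer sum in $h$ via Cauchy--Schwarz (which contributes a factor of roughly $2M$) and apply the squaring identity once more to the resulting inner square, producing a new differencing parameter. A routine induction then establishes the key estimate
\begin{equation*}
|S|^{2^k} \ll M^{2^k - k - 1} \sum_{|h_1|,\ldots,|h_k|<M} \left|\sum_{n \in J} \exp\bigl(2\pi i \Delta_{h_1}\cdots \Delta_{h_k} f(n)\bigr)\right|,
\end{equation*}
valid for every $1 \le k \le m-1$, where $J = J(h_1,\ldots,h_k)$ is an interval of length at most $M$.

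Specializing to $k=m-1$, the inner polynomial in $n$ is linear by the leading-coefficient computation above, so the innermost exponential sum is a geometric series whose modulus does not exceed $\min\{M,\ \|m!\,\vartheta\, h_1\cdots h_{m-1}\|^{-1}\}$. Substituting this bound and taking the $2^{m-1}$-th root of the resulting inequality yields the stated estimate. The only delicate aspect of this plan is the exponent bookkeeping: one must track carefully how the factor $M$ accumulates from each application of Cauchy--Schwarz, verify that the total exponent of $M$ outside the multiple sum is exactly $2^{m-1}-m$ so that after the $2^{1-m}$-th root it produces the announced factor $M^{1-m/2^{m-1}}$, and confirm that the ranges of summation for the various $h_j$ all end up in $(-M,M)$. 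No analytic input beyond Cauchy--Schwarz and the trivial bound on a geometric series is required.
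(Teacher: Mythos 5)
The paper does not give a proof of Lemma~\ref{lem:Weyl}; it cites it as essentially Weyl's classical estimate, referring to~\cite[Proposition~8.2]{IwKow}. Your proposal is the standard iterated Weyl differencing argument that establishes precisely this inequality, and it is correct: the induction claim $|S|^{2^k}\ll M^{2^k-k-1}\sum_{|h_1|,\ldots,|h_k|<M}\bigl|\sum_{n\in J}\exp(2\pi i\,\Delta_{h_1}\cdots\Delta_{h_k}f(n))\bigr|$ is the right one (at step $k\to k+1$ one squares, applies Cauchy--Schwarz to the outer sum over $(h_1,\ldots,h_k)$ picking up $(2M)^k$, and uses the squaring identity on the inner sum, so the exponent advances as $2(2^k-k-1)+k=2^{k+1}-(k+1)-1$), the leading-coefficient computation $\frac{m!}{(m-k)!}\vartheta h_1\cdots h_k$ is correct and specializes to $m!\,\vartheta h_1\cdots h_{m-1}$ at $k=m-1$, and the geometric-series bound $\min\{M,\|m!\vartheta h_1\cdots h_{m-1}\|^{-1}\}$ together with the $2^{1-m}$-th root gives exactly the stated estimate. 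One small imprecision in the prose: each Cauchy--Schwarz step contributes a factor $(2M)^k$ (the number of lattice points $(h_1,\ldots,h_k)$), not merely $2M$; your displayed inequality already encodes this correctly, so the proof as written is sound.
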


\subsection{Integer points on curves and varieties}

We also need the following estimate
of Bombieri and Pila~\cite{BoPi} on
the number of integral points on polynomial curves.

\begin{lem}\label{lem:BombPila}
Let $\cC$ be an absolutely irreducible curve of degree $d\ge 2$
and $H\ge \exp(d^6)$. Then the number of integral points on $\cC$ and
inside of a square $[0,H]\times [0,H]$ does not exceed
$H^{1/d}\exp(12\sqrt{d\log H\log \log H})$.
\end{lem}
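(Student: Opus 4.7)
The plan is to use the classical determinant method of Bombieri and Pila. Let $F\in\Z[X,Y]$ be an absolutely irreducible polynomial of degree $d$ defining $\cC$. The overall strategy is to cover the portion of $\cC\cap[0,H]^2$ by $O(H/\Delta)$ short analytic arcs of horizontal length $\Delta$ and, on each arc, to produce an auxiliary polynomial $G\in\Z[X,Y]$ of small degree $D$ with $F\nmid G$ that vanishes at every integer point of the arc; Bézout's theorem then limits the integer points on the arc to at most $dD$. Summing over arcs yields $O\bigl(HdD/\Delta\bigr)$ in total, and a careful choice of $D$ and $\Delta$ must reproduce the stated bound.

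To construct $G$ on a given arc, first discard the $O_d(1)$ singular points and points of vertical tangency of $\cC$; on what remains, the curve is locally the graph of a real-analytic function $y=\varphi(x)$ whose derivatives are controlled by $H$ and $d$. List integer points $(x_1,y_1),\ldots,(x_L,y_L)$ with $L=\binom{D+2}{2}$ and form the $L\times L$ matrix $\Phi$ with $(k,(i,j))$-entry $x_k^i y_k^j$ for $i+j\le D$. Substituting $y_k=\varphi(x_k)$ and Taylor-expanding around the left endpoint of the arc, one writes $\det\Phi$ as an alternating sum over Vandermonde-type minors in the $x_k$'s; after the standard bookkeeping (involving both the Vandermonde cancellation and the decay of Taylor coefficients of $\varphi^{j}$) one arrives at an analytic estimate of the shape
$$
|\det\Phi|\le L!\, H^{O(L^2)}\,\Delta^{cL^{3}/d}
$$
for an absolute constant $c>0$. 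Since $\Phi$ has integer entries, $\det\Phi$ is either zero or at least one in absolute value; choosing $\Delta$ sufficiently small that the right-hand side is less than $1$ forces $\det\Phi=0$, and the resulting linear dependence among columns produces the desired polynomial $G$.

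All integer points on the arc then satisfy $F=G=0$; since $F$ is absolutely irreducible of degree $d$ and $\deg G\le D$ is chosen so that $F\nmid G$, Bézout's theorem bounds their number by $dD$. Finally, optimizing the two free parameters with $\Delta\asymp H^{1-1/d}$ (up to logarithmic corrections) and $D\asymp\sqrt{d\log H/\log\log H}$ produces a total count of $H^{1/d}\exp\bigl(12\sqrt{d\log H\log\log H}\bigr)$, matching the claim. The main technical obstacle is the analytic estimate of $\det\Phi$: one must bound all derivatives $\varphi^{(k)}$ uniformly on each arc while extracting the full Vandermonde cancellation, and it is precisely this step that forces the hypothesis $H\ge\exp(d^{6})$ in order to absorb the implicit $d$-dependent constants and to ensure that the optimizing choices of $D$ and $\Delta$ are legitimate.
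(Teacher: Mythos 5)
This lemma is not proved in the paper at all: it is stated as a known result of Bombieri and Pila and cited directly to \cite{BoPi}. So there is no proof in the paper to compare your argument against. What you have written is a high‑level sketch of the Bombieri--Pila determinant method, which is indeed the method of the cited paper, so the \emph{approach} you describe is the right one: cover the curve by short arcs, on each arc use a vanishing‑determinant argument to produce an auxiliary polynomial $G$ of controlled degree with $F\nmid G$, and then invoke B\'ezout and optimize.

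That said, some of the quantitative details in your sketch do not cohere internally, which is worth flagging even for a sketch. With $\Delta\asymp H^{1-1/d}$ and $D\asymp\sqrt{d\log H/\log\log H}$, the total count $O(HdD/\Delta)\asymp dD\,H^{1/d}$ is only $H^{1/d}$ times a quantity polynomial in $\log H$, which falls far short of the claimed factor $\exp\bigl(12\sqrt{d\log H\log\log H}\bigr)$; that factor is sub‑polynomial in $H$ but grows faster than any power of $\log H$, so it cannot be absorbed into ``logarithmic corrections'' to $\Delta$. In the actual optimization one takes $\Delta$ smaller than $H^{1-1/d}$ by exactly such a factor, and the $\exp(\cdot)$ term arises from the increased arc count, not from $dD$. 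Relatedly, the asserted shape $|\det\Phi|\le L!\,H^{O(L^2)}\Delta^{cL^3/d}$ does not lead to the stated choice of $\Delta$: plugging in $L\asymp d\log H/\log\log H$ to the condition $L!\,H^{O(L^2)}\Delta^{cL^3/d}<1$ forces $\Delta<H^{-O(d/L)}=(\log H)^{-O(1)}$, which is vastly smaller than $H^{1-1/d}$ and would destroy the bound. The correct Bombieri--Pila determinant estimate has a different balance of exponents (the $\Delta$‑exponent grows like a lower power of $L$ relative to the $H$‑exponent), and getting this balance right is precisely what the full argument requires. Since the lemma is being used here as a black box, none of this affects the paper, but your sketch as written would not reproduce the stated bound without correcting those exponents.
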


The following statement is a particular case of a more general result of
Wooley~\cite[Theorem 1.1]{Wool2}.

\begin{lem}
\label{lem:Wooley} The number of solutions of the system of diophantine equations
$$
x_1^j+\ldots +x_{8}^j=x_{9}^j+\ldots +x_{16}^j,\quad j=1,2,3
$$
in integers $x_i$ with $|x_i| \le M$,
$i=1, \ldots, 16$, is at most
$M^{10+o(1)}$.
\end{lem}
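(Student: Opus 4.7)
The plan is to recognise the system as an instance of the Vinogradov mean value $J_{k,m}(H)$ defined in~\eqref{eq:System}, with $k=8$ and $m=3$, and then to apply the estimate~\eqref{eq:bound J} together with the stated bound $\kappa(m)\le m^2-1$.

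First I would note that the system is translation invariant: for any integer $c$, the substitution $x_i\mapsto x_i+c$ preserves the equations
$$
x_1^j+\ldots+x_8^j=x_9^j+\ldots+x_{16}^j,\qquad j=1,2,3,
$$
because the binomial expansion $(x_i+c)^j=\sum_{\ell=0}^{j}\binom{j}{\ell}c^{j-\ell}x_i^\ell$ produces correction terms that agree on both sides whenever the lower-degree power-sum equalities already hold. Consequently, the shift $x_i\mapsto x_i+M+1$ sets up a bijection between solutions with $|x_i|\le M$ and solutions with $1\le x_i\le 2M+1$, so the quantity to estimate is exactly $J_{8,3}(2M+1)$.

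Second, I would invoke the consequence of Wooley's theorem~\cite{Wool2} quoted in the excerpt: since $\kappa(3)\le 3^2-1=8$, the bound~\eqref{eq:bound J} holds at $k=8$, $m=3$, giving
$$
J_{8,3}(H)\le C(8,3)\,H^{2\cdot 8-3\cdot 4/2+o(1)}=H^{10+o(1)}
$$
as $H\to\infty$. Specialising to $H=2M+1$ yields the claimed bound $M^{10+o(1)}$.

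I do not expect any substantive obstacle here, since the lemma is essentially the specialisation of~\eqref{eq:bound J} to the parameters $m=3$, $k=8$, and the only non-trivial ingredient is Wooley's estimate, which is imported as a black box. The translation-invariance step is the only detail that needs to be written out carefully, since the range $|x_i|\le M$ is not literally the range $1\le x_i\le H$ used in the definition of $J_{k,m}(H)$; once this is dispensed with, the verification $k=8\ge\kappa(3)$ and the arithmetic $2k-m(m+1)/2=10$ complete the argument.
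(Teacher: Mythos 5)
Your proposal is correct and follows essentially the same route as the paper's proof: shift by $M+1$ to reduce to $J_{8,3}(2M+1)$, then invoke $\kappa(3)\le 8$ from Wooley's theorem and the bound~\eqref{eq:bound J}. You supply the translation-invariance justification a bit more explicitly than the paper, which simply calls it a ``trivial algebraic transformation,'' but the argument is the same.
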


\begin{proof} Writing  $x_i = X_i -M-1$ with a positive integer $X_i \le 2M+1$,
$i=1, \ldots, 16$, after some trivial algebraic transformation we
see that the number of  solutions to the above equation is equal to
$J_{8,3}(2M+1)$. Since by  the result of
Wooley~\cite[Theorem~1.1]{Wool2} we have  $\kappa(3)\le 8$, the
bound~\eqref{eq:bound J} applies with $H =2M+1$.
\end{proof}

We note that Lemma~\ref{lem:Wooley} can be formulated in a more
general form with $\kappa(3)$ instead of $8$ variables on each side,
but this generalization (assuming possible improvements of the bound
$\kappa(3)\le 8$) does not affect our
main results.

\subsection{Congruences with many solutions}

The following result is used in the proofs of Theorems~\ref{thm:ConcentElliptic} and~\ref{thm:HyperellipticOptimal}.

\begin{lem}
\label{lem:deter}
Let $f,g\in \F_p[X]$ be two  polynomials of degrees $n$ and $m$ such that $m \nmid n$.
Assume that the integers $x_1,\ldots ,x_n$ are pairwise
distinct modulo $p$
and $y_1, \ldots ,y_n$ are arbitrary integers.
Then the  congruence
\begin{equation}
\label{eq:f=g}
f(x) \equiv g(y)\pmod p, \qquad 0 \le x,y < p,
\end{equation}
has at most $mn$ solutions with
\begin{equation}
\label{eq:matrix}
\det\(
\begin{matrix}x^n & x^{n-1} & \ldots & x & y\\x_1^n & x_1^{n-1} &
\ldots & x_1 & y_1\\ & & \ldots \\x_n^n & x_n^{n-1} & \ldots & x_n &
y_n
\end{matrix}\)\equiv 0 \pmod p.
\end{equation}
\end{lem}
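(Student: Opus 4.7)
The plan is to view the $(n+1)\times(n+1)$ determinant $D(x,y)$ from the statement as a polynomial in $\F_p[x,y]$ and exploit its very rigid shape. Since $y$ appears only in the upper-right corner of the matrix, $D(x,y)$ is linear in $y$; likewise the $x$-dependence lies only in the first row, so $D$ has degree at most $n$ in $x$ with no constant term in $x$. Expanding along the first row,
$$
D(x,y) = P(x) + (-1)^n M_n \, y,
$$
where
$$
M_n = \det\left(x_i^{n-j+1}\right)_{1\le i,j\le n} = \left(\prod_{i=1}^n x_i\right)\prod_{1\le i<j\le n}(x_j-x_i)
$$
is the minor obtained from the last $n$ rows by deleting the $y$-column, and $P(x)=\sum_{j=0}^{n-1}(-1)^j M_j x^{n-j}$ involves the analogous minors $M_j$ (which depend on the $y_i$'s).

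I would first treat the generic case $M_n\not\equiv 0 \pmod p$, which in particular covers the situation where every $x_i$ is nonzero modulo $p$. There the determinant condition solves for $y$ as a polynomial of $x$:
$$
y \equiv Q(x) := (-1)^{n+1} M_n^{-1} P(x)\pmod p,
$$
with $d:=\deg Q \le n$. Substituting into $f(x)\equiv g(y)\pmod p$ leaves the univariate congruence $\Phi(x)\equiv 0\pmod p$, where $\Phi(x)=f(x)-g(Q(x))$. The crux is to show that $\Phi$ is not identically zero modulo $p$ and has degree at most $mn$. If $d=0$ this is clear, as $\Phi$ is $f(x)$ minus a constant. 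If $d\ge 1$, the leading coefficient of $g(Q(x))$ is the product of the (nonzero) leading coefficients of $g$ and of $Q^m$, hence is nonzero in $\F_p$; consequently $\deg g(Q(x))=md$. Because $m\nmid n$ rules out $md=n$, the leading terms of $f(x)$ and $g(Q(x))$ live at distinct degrees and cannot cancel, so $\Phi\not\equiv 0$ with $\deg \Phi \le \max(n,md)\le mn$, giving at most $mn$ roots $x$ and hence at most $mn$ admissible pairs $(x,Q(x))$.

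If instead $M_n\equiv 0\pmod p$, some $x_i\equiv 0\pmod p$, and $D(x,y)=P(x)$ no longer involves $y$; the determinant condition becomes $P(x)\equiv 0\pmod p$, which, provided $P\not\equiv 0$, has at most $n$ roots in $x$, and for each such $x$ the congruence $g(y)\equiv f(x)\pmod p$ admits at most $m$ values of $y$, again at most $mn$ pairs. The main obstacle, which I expect to require the most care, is the fully degenerate subcase in which $D(x,y)$ vanishes identically in $\F_p[x,y]$; a short linear-algebra analysis of the $n\times(n+1)$ matrix formed by the last $n$ rows shows, using the distinctness of the $x_i$, that this can happen only when some pair $(x_i,y_i)\equiv(0,0)\pmod p$ — a configuration that should either be excluded by hypothesis in the intended applications or isolated and removed from the count by hand.
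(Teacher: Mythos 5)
Your proof is correct and takes essentially the same route as the paper: invert the $n\times n$ minor to solve the determinant condition for $y$ as a polynomial $Q(x)$ of degree at most $n$ with no constant term, substitute into $f(x)\equiv g(y)\pmod p$, and use $m\nmid n$ to see that the leading terms of $f$ and $g\circ Q$ cannot cancel, so the resulting univariate congruence is nontrivial of degree at most $mn$. You are somewhat more careful than the paper at one point: the paper's opening display asserts the lower $n\times n$ minor is $\not\equiv 0\pmod p$ using the factor $x_1\cdots x_n$, which tacitly requires every $x_i$ to be nonzero modulo $p$ rather than merely pairwise distinct, whereas you isolate the cases $M_n\equiv 0$ and $D\equiv 0$ (the latter occurring exactly when some $(x_i,y_i)\equiv(0,0)\pmod p$), a harmless refinement for the paper's applications.
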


\begin{proof}
Since $$
\det\(\begin{matrix}x_1^n & x_1^{n-1} &
\ldots & x_1 \\ & & \ldots \\x_n^n & x_n^{n-1} & \ldots & x_n
\end{matrix}\)=x_1\ldots x_n\prod_{1\le i<j\le n}(x_i-x_j)\not \equiv
0 \pmod p ,$$
we deduce that, for any $x$ and $y$, the last column
in~\eqref{eq:matrix} is a unique modulo $p$ linear
combination of the previous columns. In particular, for
every solution
$(x,y)$ to ~\eqref{eq:f=g} and~\eqref{eq:matrix}  we have
$y\equiv h(x) \pmod p$ for some nontrivial polynomial $h(X)
\in \F_p[X]$
that does not depend on $x$ and $y$.

Now we insert this into~\eqref{eq:f=g}. We observe that now the
right hand side of~\eqref{eq:f=g}, that is $g(h(x))$, is a nontrivial polynomial
 of degree $m\deg h$.
Thus, the congruence~\eqref{eq:f=g} is a nontrivial polynomial
congruence of degree $d$ with $n\le
d\le mn$. Therefore it has at most $mn$ solutions modulo $p$.
\end{proof}

\subsection{Background on Geometry of Numbers}

We recall that a lattice in $\R^n$ is an additive subgroup of $\R^n$
generated by $n$ linearly independent vectors.
Let $D$ be a symmetric convex body, that is, $D$ is a compact convex subset of
$\mathbb{R}^{n}$ with non-empty interior that is centrally symmetric with respect to $0$.
Then, for a lattice in $\Gamma\subseteq\R^n$
and $i=1,\ldots,n$, the $i$-th successive minimum
$\lambda_i(D,\Gamma)$
of the set $D$ with respect to the lattice $\Gamma$ is defined as
the minimal number $\lambda $ such that the set $\lambda  D$
contains
$i$ linearly independent vectors of the lattice $\Gamma$. In particular
$\lambda_1(D,\Gamma)\le\ldots\le\lambda_n(D,\Gamma)$. We recall the
following result given in~\cite[Proposition~2.1]{BHW} (see
also~\cite[Exercise~3.5.6]{TaoVu} for a simplified form that is
still enough for our purposes).

\begin{lem}
\label{lem:latp} We have,
$$\#(D\cap\Gamma)\le \prod_{i=1}^n \(\frac{2i}{\lambda_i(D,\Gamma)} + 1\).
$$
\end{lem}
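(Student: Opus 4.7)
The plan is to proceed by induction on the dimension $n$, reducing the count to a product of one-dimensional counts along directions aligned with the successive minima. For the base case $n = 1$, one has $\Gamma = \Z g$ for some generator $g$, and the definition of $\lambda_1$ forces $g$ (up to sign) to be the shortest lattice vector in the Minkowski functional of $D$; any $kg \in D$ then satisfies $|k|\lambda_1 \le 1$, so $\#(D \cap \Gamma) \le 2/\lambda_1 + 1$, matching the claim.

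For the inductive step, first select linearly independent lattice vectors $v_1,\ldots,v_n \in \Gamma$ with $v_i \in \lambda_i(D,\Gamma) D$, which exist by the definition of the successive minima. Set $V = \mathrm{span}(v_1,\ldots,v_{n-1})$, $\Gamma' = \Gamma \cap V$, and let $\pi \colon \R^n \to \R^n / V \cong \R$ be the quotient projection. Then decompose lattice points of $D$ along the fibres of $\pi$:
$$
\#(D \cap \Gamma) = \sum_{w \in \pi(D) \cap \pi(\Gamma)} \#\bigl(D \cap \pi^{-1}(w) \cap \Gamma\bigr).
$$
Each fibre $D \cap \pi^{-1}(w)$, by the symmetry and convexity of $D$, is an affine translate of a symmetric convex subset of the central slice $D \cap V$. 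Applying the inductive hypothesis to this slice with respect to $\Gamma'$, whose successive minima $\lambda_i'$ dominate the ambient ones (any $i$ linearly independent vectors in $\mu D \cap V$ are $i$ linearly independent vectors in $\mu D$, forcing $\lambda_i' \ge \lambda_i$), bounds each fibre count by $\prod_{i=1}^{n-1}\bigl(2i/\lambda_i + 1\bigr)$.

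It then remains to bound the number of nonempty fibres by $2n/\lambda_n + 1$; combining, we close the induction as
$$
\#(D \cap \Gamma) \le (2n/\lambda_n + 1)\prod_{i=1}^{n-1}(2i/\lambda_i + 1) = \prod_{i=1}^n (2i/\lambda_i + 1).
$$
The hard part will be precisely this last count: $\pi(\Gamma)$ is a rank-one lattice in $\R^n/V$ with generator $g$, and one needs $|g| \ge \lambda_n/n$ in the Minkowski functional of $\pi(D)$. The obstacle is that $v_1,\ldots,v_n$ realising the successive minima need not form a basis of $\Gamma$, so $\pi(v_n)$ need not generate $\pi(\Gamma)$ and may be up to $n$ times too large. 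This is exactly why the stated bound has $2i/\lambda_i$ rather than the naïve $2/\lambda_i$. The standard way to overcome this is a Mahler-type construction producing an actual basis $b_1,\ldots,b_n$ of $\Gamma$ with $b_i$ lying in $i \lambda_i D$ (rather than $\lambda_i D$); expressing $x \in D \cap \Gamma$ as $x = \sum a_i b_i$ with $a_i \in \Z$ and using convexity of $D$ together with $b_i/(i\lambda_i) \in D$ then yields the individual bounds $|a_i| \le i/\lambda_i$, from which the product estimate is immediate.
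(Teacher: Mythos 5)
The paper does not actually prove Lemma~\ref{lem:latp}: it is quoted verbatim from~\cite[Proposition~2.1]{BHW} (Betke, Henk and Wills), with~\cite[Exercise~3.5.6]{TaoVu} mentioned as a weaker alternative. So your proposal cannot be matched against a proof in the paper; it has to stand on its own, and it does not, because the gap sits exactly at the step you single out as ``the hard part'' and then dispatch in one sentence.

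Your inductive fibring set-up is the right strategy, and you correctly identify that the factor $i$ in $2i/\lambda_i+1$ comes from the minima-realising vectors not forming a basis. But the proposed rescue is not sound. You claim that after producing a Mahler-type basis $b_1,\ldots,b_n$ of $\Gamma$ with $b_i\in i\lambda_i D$, writing $x=\sum a_i b_i\in D$ and ``using convexity of $D$ together with $b_i/(i\lambda_i)\in D$'' yields $|a_i|\le i/\lambda_i$. This is not a valid convexity argument. The coefficients of $x$ in a basis are linear functionals of $x$ determined by the \emph{dual} basis; they are not controlled by the norms of the $b_i$ themselves. A point and all basis vectors can lie in $D$ while the coefficients are arbitrarily large: with $D=[-1,1]^2$, $b_1=(1,\varepsilon)$, $b_2=(1,-\varepsilon)$ one has $b_1,b_2\in D$ and $(0,1)=\tfrac{1}{2\varepsilon}b_1-\tfrac{1}{2\varepsilon}b_2\in D$, with coefficients of size $1/(2\varepsilon)$. (This particular pair is not a Mahler basis for the lattice it generates, which is precisely the point: the eventual bound is true only because the basis is reduced, and the reason is \emph{not} convexity.) The honest route back to the coefficient bound is the same projection argument you already flagged: one must show $\pi(b_n)$ is not too short in $\pi(D)$, where $\pi$ is the quotient by $\mathrm{span}(b_1,\ldots,b_{n-1})$. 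The naive ``round a short real lift to a lattice vector'' argument produces an error $\tfrac12\sum_{i<n}\|b_i\|_D$, which can already exceed $\lambda_n$ once $n\ge 3$, so it does not deliver $\mu\ge\lambda_n/n$. Getting the factor $2i/\lambda_i+1$ requires the sharper bookkeeping of~\cite{BHW}, which your sketch replaces with an unjustified assertion. As written, the proposal reduces the lemma to a claim at least as hard as the lemma itself.
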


Using that
$$\frac{2i}{\lambda_i(D,\Gamma)} + 1 \le (2i+1)\max\left
\{\frac1{\lambda_i(D,\Gamma)},1\right\}$$
and denoting, as usual, by $(2n+1)!!$ the product of all odd positive numbers
up to $2n+1$,  we derive:

\begin{cor} \label{cor:latpoints} We have,
$$\prod_{i=1}^n \min\{\lambda_i(D,\Gamma),1\} \le (2n+1)!!
(\#(D\cap\Gamma))^{-1}.$$
\end{cor}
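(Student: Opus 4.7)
The proof is essentially algebraic manipulation of the bound in Lemma~\ref{lem:latp} combined with the pointwise inequality that the authors have already flagged just before the corollary statement.

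My plan is to first verify the elementary inequality
$$\frac{2i}{\lambda}+1\le (2i+1)\max\left\{\frac{1}{\lambda},1\right\}$$
for every $\lambda>0$, by splitting into the two cases $\lambda\le 1$ and $\lambda\ge 1$. In the first case the right-hand side equals $(2i+1)/\lambda=2i/\lambda+1/\lambda$ and we use $1\le 1/\lambda$; in the second case the right-hand side equals $2i+1$ and we use $2i/\lambda\le 2i$. Applying this to $\lambda=\lambda_i(D,\Gamma)$ for each $i$ and plugging into Lemma~\ref{lem:latp} gives
$$\#(D\cap\Gamma)\le\prod_{i=1}^n(2i+1)\max\left\{\frac{1}{\lambda_i(D,\Gamma)},1\right\}=(2n+1)!!\prod_{i=1}^n\max\left\{\frac{1}{\lambda_i(D,\Gamma)},1\right\},$$
since by definition $(2n+1)!!=\prod_{i=1}^n(2i+1)$.

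Finally I would use the identity $\max\{1/\lambda,1\}=1/\min\{\lambda,1\}$, which holds for any $\lambda>0$, to rewrite the product on the right and obtain
$$\#(D\cap\Gamma)\le\frac{(2n+1)!!}{\prod_{i=1}^n\min\{\lambda_i(D,\Gamma),1\}}.$$
Rearranging (with $\#(D\cap\Gamma)\ge 1$ trivially, so division is legitimate) yields the claimed bound. There is no real obstacle here: the statement is a mechanical consequence of Lemma~\ref{lem:latp} together with the one-line inequality above, so the only thing to be careful about is the bookkeeping of $\min$ versus $\max$ when moving the $\lambda_i$ from the denominator to the numerator.
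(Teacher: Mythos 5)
Your proof is correct and matches the paper's argument exactly: the paper states the pointwise inequality $\tfrac{2i}{\lambda_i}+1\le(2i+1)\max\{1/\lambda_i,1\}$ immediately before the corollary and derives the result from Lemma~\ref{lem:latp} in precisely the way you spell out. The only difference is that you make the two-case verification of the elementary inequality and the $\max$--$\min$ reciprocal identity explicit, which the paper leaves to the reader.
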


\section{Proofs}

\subsection{Proof of Theorem~\ref{thm:ConcentElliptic}}

For the sake of brevity,
in this section we denote $I=I_{f}(M;R,S)$. We can assume that $I$ is large. We fix some  $L$ with
\begin{equation}\label{eq:1cong L}
 1\le L\le \frac{I}{20},
\end{equation}
to
be chosen later. By the pigeonhole principle,  there exists $Q$ such that the congruence
$$
y^{2}\equiv f(x)\pmod p, \qquad
Q+1\le x\le Q+M/L,\
S+1\le y\le S+M,
$$
has at least $I/L$ solutions.
Since there are at most two solutions to the above congruence
with the same value of $x$, by the pigeonhole principle,
there exists an interval of length $20M/I$  containing at least
$10$ solutions $(x,y)$ with pairwise distinct values $x$. Let
$x_0$ be the first of these values and let $(x_0,y_0)$
be the corresponding solution. It is
clear that $I/L$ is bounded by the number of solutions of
\begin{equation*}
\begin{split}
  (y_0+y)^2&\equiv f(x_0+x) \pmod p,\\
  -M/L\le x&\le M/L,\quad -M\le y\le M,
\end{split}
\end{equation*}
which is equivalent to
\begin{equation}
\label{eqn:1c}
\begin{split}
&y^2\equiv c_3x^3+c_2x^2+c_1x+c_0y\pmod p,\\
&-M/L\le x\le M/L,\quad -M\le y\le M,
\end{split}
\end{equation}
with $(c_3,p)=1$.
Besides, there are at least $10$ solutions $(x,y)$ with $x$ pairwise
distinct and such that
$0 \le x\le 20M/I$. From
these $10$ values we fix $3$ solutions $(x_1,y_1), (x_2,y_2),(x_3,y_3)$ and rewrite the congruence~\eqref{eqn:1c}  in the matrix form
\begin{equation}\label{eq:cong 12}
\left (\begin{matrix}x^3 & x^2 & x & y\\
x_3^3 & x_3^2 & x_3 & y_{3}\\
x_2^3 & x_2^2 & x_2 & y_2\\
x_1^3 & x_1^2 & x_1 & y_1\\
\end{matrix}\right )\(\begin{matrix}c_3\\
c_2\\c_1 \\c_0  \end{matrix}\)\equiv
\(\begin{matrix}y^2\\y_3^2\\y_2^2 \\y_1^2
\end{matrix}\)\pmod p.
\end{equation}
By Lemma~\ref{lem:deter},
we know that at most $6$
pairs $(x,y)$, with $x$ pairwise distinct, satisfy both the
congruence~\eqref{eq:cong 12} and the congruence
$$
\left |\begin{matrix} x^{h} &\ldots & x & y\\ x_{h}^{h}
&\ldots & x_{h} & y_{h}\\  & \ldots & \\x_1^{h} &\ldots & x_1 &
y_1
\end{matrix}\right |\equiv 0\pmod p.
$$
Since there are at least $10$ solutions to~\eqref{eq:cong 12}, for one of
them, say $(x_4,y_4)$, we have
$$
\Delta=\left |\begin{matrix}x_4^3 & x_4^2 & x_4 & y_4\\
x_3^3 & x_3^2 & x_3 & y_{3}\\
x_2^3 & x_2^2 & x_2 & y_2\\
x_1^3 & x_1^2 & x_1 & y_1\\
\end{matrix}\right |\not \equiv 0\pmod p.
$$

Note that $1\le |\Delta |\ll (M/I)^6M$. Now we solve the
system of congruences
\begin{equation}
\label{eq:cong 13}\left (\begin{matrix}x_4^3 & x_4^2 & x_4 & y_4\\
x_3^3 & x_3^2 & x_3 & y_{3}\\
x_2^3 & x_2^2 & x_2 & y_2\\
x_1^3 & x_1^2 & x_1 & y_1\\
\end{matrix}\right )\(\begin{matrix}c_3\\
c_2\\c_1 \\c_0  \end{matrix}\)\equiv
\(\begin{matrix}y_4^2\\y_3^2\\y_2^2 \\y_1^2
\end{matrix}\)\pmod p
\end{equation}
with respect to $(c_3, c_2, c_1, c_0)$. We write $\Delta_j$ for
the determinant of the matrix on the left hand side where we have
substituted the column $j$ by the vector $(y_4^2,y_3^2,y_2^2,y_1^2)$. With this notation we have that
$$
c_j\equiv \Delta_{4-j}\Delta^* \pmod p,\quad j=0,\ldots 3,
$$
where $\Delta^*$ is defined by $\Delta\Delta^*\equiv 1 \pmod p$,
and the congruence~\eqref{eqn:1c} is equivalent to
$$
\Delta_1x^3+\Delta_2x^2+\Delta_3x+\Delta_4y- \Delta y^2\equiv 0\pmod p.
$$
In particular, since, as we have noticed, $c_3\not\equiv 0\pmod p$, we have that
$\Delta_1\not\equiv 0\pmod p$. We can
write this congruence as an equation over $\Z$:
\begin{equation}\label{eq:1integer}
\Delta_1x^3+\Delta_2x^2+\Delta_3x+\Delta_4y- \Delta y^2=pz,
\qquad (x,y,z)\in \Z^3.
\end{equation}

We  can easily check that
$$
|\Delta_4|\ll (M/I)^6M^2
$$
and
$$
|\Delta_j|\ll (M/I)^{2+j}M^3,\qquad j=1,2,3.
$$
Thus, collecting the above estimates and taking into account $L\ll I$, we derive
\begin{equation*}
\begin{split}
|z|&\ll  \frac{1}{p} \(|\Delta_1|(M/L)^3+|\Delta_2|(M/L)^2+|\Delta_3|(M/L)
+|\Delta_4|M+|\Delta|M^2\)\\
&\ll   \frac{M^3}{p}\(\frac{M^6}{I^3L^3}+\frac{M^7}{I^4L^2}+\frac{M^6}{I^5L}+\frac{M^6}{I^6}\) \ll   \frac{M^9}{pI^3L^3}.
\end{split}
\end{equation*}
Since $\Delta_1\not=0,\,\Delta\not=0$, for each $z$, the curve~\eqref{eq:1integer} is absolutely irreducible,
and thus by Lemma~\ref{lem:BombPila}
it contains at most $M^{1/3+o(1)}$ integer points $(x,y)$
with $|x|,|y|\le M$. Hence
$$
\frac{I}{L}\le M^{1/3+o(1)}
\(1+\frac{M^9}{pI^3L^3}\)
$$
for any $L$ satisfying~\eqref{eq:1cong L}. This implies, that
\begin{equation}
\label{eq:1prelim}
I\le LM^{1/3+o(1)}+\frac{M^{7/3}}{p^{1/4}L^{1/2}}.
\end{equation}

If $M<10p^{1/8}$, then we take $L=1$ and derive from~\eqref{eq:1prelim} that
$$
I\le M^{1/3+o(1)}+\frac{M^{7/3+o(1)}}{p^{1/4}}\le M^{1/3+o(1)}.
$$

Let now $M>10p^{1/8}$. We can assume that $I>M^{5/3}p^{-1/6}$, as otherwise there is nothing to prove. Then we take $L=\fl{M^{4/3}p^{-1/6}}$ and note
that the condition~\eqref{eq:1cong L} is satisfied.
Thus, we derive from~\eqref{eq:1prelim} that
$$
I\le LM^{1/3+o(1)}+\frac{M^{7/3+o(1)}}{p^{1/4}L^{1/2}}\le M^{5/3+o(1)}p^{-1/6}
$$
and the result follows.

\subsection{Proof of Theorem~\ref{thm:ConcentEllipticWiderRange}}

Clearly we can assume that
\begin{equation}
\label{eq:Big M}
M > p^{5/23}
\end{equation}
as otherwise
$$
(M^3/p)^{1/16} M < \frac{M^{5/3+o(1)}}{p^{1/6}}
$$
and the result follows from Theorem~\ref{thm:ConcentElliptic}. We can also assume that $M=o(p^{1/3})$.

We fix one solution $(x_0,y_0)$ to the congruence~\eqref{eq:cong y2=f(x)} and by making the change of variables $(x,y) \mapsto (x-x_0, y-y_0)$, we  see that it is enough to
study a congruence of the form
\begin{equation}
\label{eq:ellipticAfterShifts}
y^2-c_0y\equiv c_3x^3+c_2x^2+c_1x\pmod p,\quad |x|,|y|\le M.
\end{equation}
Let $\cW$ be the set of pairs $(x,y)$ that satisfy~\eqref{eq:ellipticAfterShifts},
and by $\cX$ we denote the set of $x$ for which $(x,y)\in \cW$ for some $y$.
Let
$$
\rho = \frac{\#\cX}{M}.
$$

We now fix some $\varepsilon > 0$ and  assume that
\begin{equation}
\label{eq:large rho1} \rho  \ge  (M^{3}/p)^{1/16} M^\varepsilon.
\end{equation}
We also assume that $M$ is sufficiently large. In view of~\eqref{eq:Big M} and~\eqref{eq:large rho1}, we also have
\begin{equation}
\label{eq:large rho2}
\rho > M^{-1/10}.
\end{equation}

For  $\vartheta >0$ we define  the intervals
$$
I_{\nu,\vartheta}=[-\vartheta M^\nu, \vartheta M^\nu],
\qquad  \nu =1,2,3,
$$
which we treat as intervals in $\F_p$, that is, sets
of residues modulo $p$ of several consecutive integers.

We now consider the set
$$
\cS\subseteq  I_{1,8}\times I_{2,8}\times I_{3,8}
$$
of all triples
\begin{equation}
\label{eq:triples} \vec{s}
\equiv
(x_1+\ldots +x_{8}, \, x_1^2+\ldots
+x_{8}^2, \, x_1^3+\ldots +x_{8}^3)\pmod p,
\end{equation}
where $x_i$, $i=1, \ldots, 8$, independently run through the set
$\cX$. We observe that the system of congruences
\begin{equation}
\label{eq:congrWooley} x_1^j+\ldots +x_{8}^j\equiv x_{9}^j+\ldots
+x_{16}^j \pmod p,\qquad j=1,2,3,
\end{equation}
has   at most $M^{10+o(1)}$ solutions in integers $x_i,y_i$ with
$|x_i|, |y_i|\le M$. Indeed, since
$M=o(p^{1/3})$,
the
above congruence is converted to the system of diophantine equations
$$
x_1^j+\ldots +x_{8}^j=x_{9}^j+\ldots +x_{16}^j,\quad j=1,2,3,
$$
which by Lemma~\ref{lem:Wooley} has at most $M^{10+o(1)}$ solutions
in integers $x_i$ with $|x_i| \le M$,  $i=1, \ldots, 16$. Therefore,
the congruence~\eqref{eq:congrWooley} has at most $M^{10+o(1)}$
solutions in $x_i \in \cX$,  $i=1, \ldots, 16$,  as well. Thus,
collecting elements of the set $\cX^{8}$ that correspond to
the same
vector $\vec{s}$ given by~\eqref{eq:triples} and denoting the number
of such representations by $N(\vec{s})$, by the Cauchy inequality,
we obtain
$$
(\#\cX)^{8} = \sum_{\vec{s} \in \cS} N(\vec{s}) \le \(\#\cS
\sum_{\vec{s} \in \cS} N(\vec{s})^2\)^{1/2} \le \(\#\cS
M^{10+o(1)}\)^{1/2}.
$$
Thus
$$
\#\cS\ge \frac{(\#\cX)^{16}}{M^{10+o(1)}}=\rho^{16}M^{6+o(1)}.
$$
Hence, there exist
at least $\rho^{16}M^{6+o(1)}$ triples
$$
(z_1,z_2,z_3)\in I_{1,8} \times I_{2,8} \times I_{3,8}
$$
such that
$$
c_3z_3+c_2z_2+c_1z_1\equiv \tz_2-c_0\tz_1\pmod p
$$
for some $\tz_2\in I_{2,8}$ and $\tz_1\in I_{1,8}$.
In particular we have that the congruence
\begin{equation*}
\begin{split}
c_3z_3+c_2z_2+\tz_2 &+c_1z_1+c_0\tz_1\equiv 0\pmod p,\\
(z_1,\tz_1, z_2,\tz_2,z_3)&\in I_{1,8}\times I_{1,8} \times I_{2,8}\times I_{2,8}\times
I_{3,8},
\end{split}
\end{equation*}
has a set of solutions $\cS$ with
\begin{equation}
\label{eq:S large}
\# \cS\ge\rho^{16}M^{6+o(1)}.
\end{equation}

The rest of the proof is based on the ideas from~\cite{BGKS}.

We define the lattice
\begin{equation*}
\begin{split}
\Gamma = \{(X_2,X_3,\tX_2,X_1,\tX_1)&\in\Z^5~:\\
~
X_2+c_3X_3+c_2&\tX_2 + c_1X_1 + c_0\tX_1\equiv 0 \pmod p\}
\end{split}
\end{equation*}
and the body
\begin{equation*}
\begin{split}
D = \{(x_2,x_3,\tx_2,&x_1,\tx_1)\in\R^5~:\\
&|x_1|, |\tx_1|\le 8 M, \ |x_2|, |\tx_2|\le 8 M^{2},\ |x_3|\le 8 M^{3}\}.
\end{split}
\end{equation*}

We see from~\eqref{eq:S large} that
$$
\#\(D \cap \Gamma\) \ge \rho^{16}M^{6+o(1)}.
$$
Therefore, by Corollary~\ref{cor:latpoints}, the successive minima
$\lambda_i=\lambda_i(D,\Gamma)$, $i=1,\ldots,5$, satisfy the
inequality
\begin{equation}
\label{eq:ineqlambda}
 \prod_{i=1}^5\min\{1,\lambda_i\}\ll  \rho^{-16}M^{-6+o(1)}.
\end{equation}

From the definition of $\lambda_i$ it follows that there are five linearly independent vectors
\begin{equation}
\label{eq:vec5}
\vec{v}_i = (v_{2,i},v_{3,i},\tv_{2,i},v_{1,i},\tv_{1,i}) \in \lambda_i D\cap\Gamma, \quad i=1,\ldots,5.
\end{equation}
Indeed, first we choose a nonzero vector $\vec{v}_1 \in  \lambda_1 D\cap\Gamma$ and then assuming that for $1 \le i\le 5$ the
vectors  $\vec{v}_1, \ldots, \vec{v}_{i-1}$ are chosen, we choose $\vec{v}_i$ as of the
$i$ linearly independent vectors  $\vec{v} \in  \lambda_i D\cap\Gamma$ that is not in the
linear space generated by  $\vec{v}_1, \ldots, \vec{v}_{i-1}$.

We now note that
$$
\lambda_3 < 1.
$$
Indeed, otherwise from~\eqref{eq:ineqlambda} we obtain
$$
\min\{1,\lambda_1^2\}\le \min\{1,\lambda_1\} \min\{1,\lambda_2\}
\le  \rho^{-16}M^{-6+o(1)}.
$$
Thus recalling~\eqref{eq:large rho2} we see that
$$
\lambda_1 \le \frac{1}{10M^2}
$$
Then the vector
$\vec{v}_1$ must have $v_{2,1}= \tv_{2,1} = v_{1,1} =\tv_{1,1}=0$. In turn
this implies that $ v_{3,1} \equiv 0 \pmod p$ and since we assumed that $M = o(p^{1/3})$,
we obtain $v_{3,1} = 0$, which contradicts the condition that $\vec{v}_1$ is a nonzero vector.

We consider separately the following
four cases.

{\it Case~1\/}: $\lambda_5\le 1$. Then
by~\eqref{eq:ineqlambda}, we have
$$ \prod_{i=1}^5 \lambda_i  \le  \rho^{-16}M^{-6+o(1)}.
$$

We now consider the determinant $\Delta$ of the $5\times 5$ matrix that is formed
by the  vectors~\eqref{eq:vec5}.
It follows that
$$\Delta \ll M^{2+3+2+1+1} \prod_{i=1}^5 \lambda_i  \le \rho^{-16} M^{3+o(1)},
$$
which, by our assumption~\eqref{eq:large rho1}, implies that $|\Delta|<p$.
On the other hand, since $\vec{v}_i\in\Gamma$, we have $\Delta \equiv 0 \pmod p$, thus $\Delta = 0$
provided that $p$ is large enough,
which contradicts the linear independence
of the vectors in~\eqref{eq:vec5}.
Thus this case is
impossible.

{\it Case~2\/}: $\lambda_4\le 1$, $\lambda_5> 1$.
Let
$$
V = \(\begin{array}{cccc}
    v_{3,1}&\tv_{2,1}&v_{1,1}&\tv_{1,1}\\
    v_{3,2}&\tv_{2,2}&v_{1,2}&\tv_{1,2}\\
     v_{3,3}&\tv_{2,3}&v_{1,3}&\tv_{1,3}\\
    v_{3,4}&\tv_{2,4}&v_{1,4}&\tv_{1,4}\\
  \end{array}\), \quad
 \vec{w} = \(
  \begin{array}{c}
 -v_{2,1}\\
- v_{2,2}\\
- v_{2,3}\\
- v_{2,4}\\
  \end{array}
\),\quad
\vec{c} = \(
  \begin{array}{c}
   c_3\\
  c_2\\
c_1\\
c_0\\
  \end{array}
\).
$$

We have
$$
V \vec{c} \equiv \vec{w}\pmod p.
$$
Let
$$
\Delta= \det  V
$$
and let $\Delta_j$ be the determinants of the matrix obtained  by
replacing the $j$-th column of $V$ by $\vec{w}$, $j =1, \ldots, 4$.

Recalling~\eqref{eq:ineqlambda}, we have
\begin{equation}
\label{eq:T2Case2Delta}
|\Delta| \ll \lambda_1\lambda_2 \lambda_3 \lambda_4 M^{3+2+1+1}
\le \rho^{-16} M^{1+o(1)}
\end{equation}
and similarly
\begin{equation}
\label{eq:T2Case2Deltas}
\begin{split}
|\Delta_1| & \le \rho^{-16} M^{o(1)}, \qquad  |\Delta_2|\le \rho^{-16} M^{1+o(1)},\\
 |\Delta_3| &\le \rho^{-16} M^{2+o(1)}, \qquad  |\Delta_4|\le \rho^{-16} M^{2+o(1)}
\end{split}
\end{equation}
Note that, in view of~\eqref{eq:large rho1}, in particular we have
$$
|\Delta|, |\Delta_j| <p, \quad j =1, \ldots, 4.
$$
If $\Delta\equiv 0\pmod p$ then since $\vec{c}$ is nonzero modulo $p$ we also have
$\Delta_j\equiv 0\pmod p$, $j =1, \ldots, 4$, implying that $\Delta=0,\, \Delta_j=0$.
Then the  matrix formed by $\vec{v}_1, \ldots, \vec{v_4}$
is of rank at most $3$,
which contradicts
their linear independence.
Therefore $\Delta\not \equiv 0 \pmod p$ and thus we have
$$
c_i \equiv \frac{\Delta_{4-i}}{\Delta} \pmod p,
\qquad i = 0, 1, 2, 3.
$$
Since $c_3\not\equiv 0\pmod p$, we have $\Delta_1\not=0$. We now substitute this in~\eqref{eq:ellipticAfterShifts} and get that
$$
\Delta y^2-\Delta_4 y\equiv \Delta_1x^3+\Delta_2x^2+\Delta_3x\pmod p,\quad |x|,|y|\le M.
$$
We see from~\eqref{eq:large rho1}, \eqref{eq:T2Case2Delta} and~\eqref{eq:T2Case2Deltas}
that for sufficiently large $M$ the expressions on
both sides
are less than $p/2$, implying the equality
$$
\Delta y^2-\Delta_4 y=\Delta_1x^3+\Delta_2x^2+\Delta_3x,\qquad |x|,|y|\le M.
$$
Now we use Lemma~\ref{lem:BombPila} and conclude that the number of solutions is at most $M^{1/3+o(1)}$.

{\it Case~3\/}: $\lambda_3\le (10M)^{-1}$, $\lambda_4> 1$. By~\eqref{eq:ineqlambda}, we have
$$ \prod_{i=1}^3 \lambda_i  \le  \rho^{-16}M^{-6+o(1)}.
$$
Since $\lambda_3\le (10M)^{-1}$, we also have
\begin{equation}
\label{eq:Case3}
\vec{v}_i = (v_{2,i},v_{3,i},\tv_{2,i},0,0),\quad i=1,2,3.
\end{equation}
In particular,
$$
\(\begin{array}{cccc}
    v_{2,1}&v_{3,1}&\tv_{2,1}\\
    v_{2,2}&v_{3,2}&\tv_{2,2}\\
     v_{2,3}&v_{3,3}&\tv_{2,3}\\
  \end{array}\)\(
  \begin{array}{c}
   1\\
  c_3\\
c_2\\
  \end{array}
\)\equiv \(
  \begin{array}{c}
   0\\
  0\\
0\\
  \end{array}
\)\pmod p.
$$
Thus, for the determinant
$$
\Delta=\det \(\begin{array}{cccc}
    v_{2,1}&v_{3,1}&\tv_{2,1}\\
    v_{2,2}&v_{3,2}&\tv_{2,2}\\
     v_{2,3}&v_{3,3}&\tv_{2,3}\\
  \end{array}\)
$$
we have
$$
\Delta \equiv 0\pmod p.
$$
On the other hand, from~\eqref{eq:large rho2} we derive that
$$
|\Delta|\ll \lambda_1\lambda_2\lambda_3 M^7<\frac{M^{1+o(1)}}{\rho^{16}}< M^{2.6+o(1)}.
$$
Hence, $\Delta=0$, which together with~\eqref{eq:Case3} implies that the vectors $\vec{v}_1,\vec{v}_2,\vec{v}_3$ are linearly dependent,  which is impossible.

{\it Case~4\/}: $(10M)^{-1}< \lambda_3 \le 1 $, $\lambda_4> 1$. By~\eqref{eq:ineqlambda}, we have
$$
\prod_{i=1}^3 \lambda_i  \le  \rho^{-16}M^{-6+o(1)}
$$
and since $\lambda_3 > (10M)^{-1}$, we obtain
$$
\lambda_1\lambda_2< \rho^{-16}M^{-5+o(1)}.
$$
We again note that $\lambda_1 > (10M^2)^{-1}$, as otherwise $\vec{v}_1$ must have $v_{2,1}= \tv_{2,1} = v_{1,1} =\tv_{1,1}=0$. In turn
this implies that $ v_{3,1} \equiv 0 \pmod p$ and since we assumed that $M = o(p^{1/3})$,
we obtain $v_{3,1} = 0$, which contradicts the condition that $\vec{v}_1$ is a nonzero vector.

Since $\lambda_1 > (10M^2)^{-1}$ and $\rho>M^{-1/10}$, we get that $\lambda_2<(10M)^{-1}$. Thus, we have
$$
\vec{v}_i=(v_{2,i},v_{3,i},\tv_{2,i},0,0),\qquad i=1,2.
$$
Next,
$$
\(\begin{array}{cccc}
    v_{3,1}&\tv_{2,1}\\
    v_{3,2}&\tv_{2,2}\\
  \end{array}\)\(
  \begin{array}{c}
  c_3\\
c_2\\
  \end{array}
\)\equiv \(
  \begin{array}{c}
   -v_{2,1}\\
  -v_{2,2}\\
  \end{array}
\)\pmod p.
$$
Now we observe that
\begin{equation}
\label{eq:Case4Delta}
\Delta=\det\(\begin{array}{cccc}
    v_{3,1}&\tv_{2,1}\\
    v_{3,2}&\tv_{2,2}\\
  \end{array}\)\ll \lambda_1\lambda_2 M^{5}<\frac{M^{o(1)}}{\rho^{16}}.
\end{equation}
Furthermore,
\begin{equation}
\label{eq:Case4Delta1}
\Delta_1=\det\(\begin{array}{cccc}
    -v_{2,1}&\tv_{2,1}\\
    -v_{2,2}&\tv_{2,2}\\
  \end{array}\)\ll \lambda_1\lambda_2 M^{4}<\frac{M^{-1+o(1)}}{\rho^{16}},
\end{equation}
and
\begin{equation}
\label{eq:Case4Delta2}
\Delta_2=\det\(\begin{array}{cccc}
    v_{3,1}&-v_{2,2}\\
    v_{3,2}&-v_{2,2}\\
  \end{array}\)\ll \lambda_1\lambda_2 M^{5}<\frac{M^{o(1)}}{\rho^{16}}.
\end{equation}
In particular, $|\Delta|, |\Delta_1|, |\Delta_2|<p$. Therefore, if $\Delta\equiv 0\pmod p$, then
$\Delta_1\equiv\Delta_2\equiv 0\pmod p$ and we see that $\Delta=\Delta_1=\Delta_2=0$. Thus, in this case the rank of the matrix
formed with vectors $\vec{v}_1, \vec{v}_2$ is at most $1$, which contradicts
the linear independence of the vectors $\vec{v}_1, \vec{v}_2$.

Hence, $\Delta\not\equiv 0\pmod p$ and we get that
$$
c_3\equiv \frac{\Delta_1}{\Delta}\pmod p,\qquad c_2\equiv \frac{\Delta_2}{\Delta}\pmod p.
$$
We now substitute this in~\eqref{eq:ellipticAfterShifts} and get that
$$
\Delta y^2-a_0 y\equiv \Delta_1x^3+\Delta_2x^2+b_0x\pmod p,\qquad |x|,|y|\le M,
$$
for some integers $a_0,b_0$. We observe that the condition $c_3\not\equiv 0\pmod p$ implies that $\Delta_1\not=0$.

Let now
$$
T=\fl{\(\frac{p}{M}\)^{1/3}\rho^{16/3}}.
$$
Note that $M^{2/3}<T<T^2<p/2$. By the pigeonhole principle, there exists a positive integer $1\le t_0\le T^2+1$ such that
$$
|(t_0a_0)_p|\le \frac{p}{T},\qquad |(t_0b_0)_p|\le \frac{p}{T},
$$
where $(x)_p$ is the element of the residue class $x\pmod p$ with the least absolute value,
see also~\cite[Lemma~4]{CSZ}. Hence
$$
t_0\Delta y^2-(t_0a_0)_p y\equiv t_0\Delta_1x^3+t_0\Delta_2x^2+(t_0b_0)_p x\pmod p,\quad |x|,|y|\le M.
$$
By~\eqref{eq:Case4Delta},~\eqref{eq:Case4Delta1},~\eqref{eq:Case4Delta2}, the absolute value of the both hand side is bounded by $pM^{1+o(1)}T^{-1}$. Thus, we get
$$
t_0\Delta y^2-(t_0a_0)_p y= t_0\Delta_1x^3+t_0\Delta_2x^2+(t_0b_0)_p x+pz,
$$
where
$$
|x|,|y|\le M, \quad |z|<M^{1+o(1)}T^{-1}.
$$
Now we use Lemma~\ref{lem:BombPila} and conclude that the number of solutions is at most
\begin{equation*}
\begin{split}
\(\frac{M}{T}+1\)M^{1/3+o(1)}&<\(\frac{M^{4/3}}{p^{1/3}}\rho^{-16/3}+1\)M^{1/3+o(1)}\\
&<M^{2/3+o(1)}<\(\frac{M^3}{p}\)^{1/16}M.
\end{split}
\end{equation*}

Since $\varepsilon >0$ is arbitrary, the result now follows.
\subsection{Proof of Theorem~\ref{thm:ConcentDegf>3}}

Let $\cX$ be the set of integers $x\in [R+1,R+M]$ such that the congruence~\eqref{eq:cong y2=f(x)} is satisfied for some integer $y\in [S+1, S+M].$
In particular, letting $X=\#\cX$ we have
\begin{equation}
\label{eq:II0}
I_{f}(M;R,S)  \le 2X.
\end{equation}

Fix some integer $k\ge 1$ and consider the set
$$
\cY_k=\{y_1^2+\ldots +y_k^2\pmod p~:~S+1\le y_i\le S+M, \ i=1, \ldots, k\}.
$$
By making the change of variables
$y_i=S+z_i$, $i=1, \ldots, k$, we observe that
\begin{equation*}
\begin{split}
\cY_k= \{z_1^2+\ldots  +z_k^2+2S(z_1+\ldots &+z_k)+kS^2\pmod p~:\\
&
1\le z_i\le M, \ i=1, \ldots, k\}.
\end{split}
\end{equation*}
In particular,
$$\# \cY_k\le \# \left \{r+2Ss+kS^2~:~1\le r\le kM^2,\ 1\le s\le kM\right \}\le k^2M^3.$$
For any $(x_1,\ldots,x_k)\in \cX^k$ there exists $\lambda\in \cY_k$ such that
$$
f(x_1)+\ldots+f(x_k)\equiv \lambda\pmod p.
$$
Thus,
$$
X^k \le \sum_{\lambda\in \cY_k}r(\lambda)
$$
where
\begin{equation*}
\begin{split}
r(\lambda)= \# \{(x_1,\ldots, x_k)\in &[R+1,R+M]^k~:\\
&~f(x_1)+\ldots
+f(x_k)\equiv \lambda\pmod p\} .
\end{split}
\end{equation*}
Using  the Cauchy inequality, we derive
$$
X^{2k}\le \# \cY_k \sum_{\lambda\in \cY_k}r^2(\lambda)\le
k^2M^3T_k(R,M),
$$
where $T_k(R;M)$ is the number of solutions of
\begin{eqnarray*}
f(x_1)+\ldots +f(x_k)&\equiv &f(x_{k+1})+\ldots +f(x_{2k})\pmod p,
\\ (x_1,\ldots,x_{2k})&\in& [R+1,R+M]^{2k}.
\end{eqnarray*}
The quantity $T_k(R;M)$ has been defined and estimated in~\cite{CGOS} for $R=0$ but
making a change of variables, it
is clear that the same bound holds for any $R$.
In particular, it is proved  in~\cite{CGOS} that
$$
T_k(R;M)\ll \left (M^m/p+1\right )M^{m(m-1)/2}J_{k,m}(M),
$$
where, as before, $J_{k,m}(M)$ is the number of solutions of the system
of equations~\eqref{eq:System} with $H = M$.

Taking $k = \kappa(m)$
so that the bound~\eqref{eq:bound J} holds, we derive
\begin{equation*}
\begin{split}
X^{2k}& \le M^3\left (M^m/p+1\right )M^{m(m-1)/2}M^{2k-m(m+1)/2+o(1)}\\
& \le \left (M^m/p+1\right )M^{2k+3-m+o(1)}
\end{split}
\end{equation*}
and obtain
$$
X\le M(M^3/p)^{1/2\kappa +o(1)}+M^{1-(m-3)/2\kappa+o(1)},
$$
which together with~\eqref{eq:II0} concludes the proof.

\subsection{Proof of Theorem~\ref{thm:Polynomial Map}}

Let $J = J_f(M;R,S)$.

Without loss of generality we can assume that
$$
0 \le M+1< M+S < p.
$$
Applying Lemma~\ref{lem:ET small int} to the sequence
of fractional parts $\gamma_n = \{f(n)/p\}$, $n =1, \ldots, M$,
with
$$\alpha = (S+1)/p, \qquad \beta = (S+M+1)/p, \qquad K = \fl{p/M},
$$
so that we have
$$
\frac{1}{K} +\min\{\beta - \alpha, 1/k\} \ll \frac{M}{p}
$$
for $k =1, \ldots, K$, we derive
$$
J \ll \frac{M^2}{p} + \frac{M}{p} \sum_{k=1}^K
\left|\sum_{n=1}^M \exp(2 \pi i k f(n)/p)\right|.
$$
Therefore, by Lemma~\ref{lem:Weyl}, we have
\begin{equation*}
\begin{split}
J \ll \frac{M^2}{p} &+  \frac{M^{2-m/2^{m-1}}}{p}   \\
  \times &\sum_{k=1}^K
\(\sum_{-M < \ell_1 , \ldots,  \ell_{m-1}  < M}
\min\left\{M, \left \|\frac{a}{p}m! k \ell_1   \ldots
 \ell_{m-1}\right\|^{-1}\right\}\)^{2^{1-m}}.
\end{split}
\end{equation*}
Now, separating the contribution from the terms with $ \ell_1   \ldots
 \ell_{m-1} = 0$  we obtain
$$
J \ll \frac{M^2}{p} +   \frac{M^{2-m/2^{m-1}}}{p} K(M^{m-1})^{2^{1-m}}
+  \frac{M^{2-m/2^{m-1}}}{p} W ,
$$
where
$$
W = \sum_{k=1}^K
\(\sum_{0 < |\ell_1|, \ldots,  |\ell_{m-1}| < M}
\min\left\{M, \left \|\frac{a}{p}m! k \ell_1   \ldots
 \ell_{m-1}\right\|^{-1}\right\}\)^{2^{1-m}}.
$$

Hence, recalling the choice of $K$, we derive
\begin{equation}
 \label{eq:J and W}
J \ll  \frac{M^2}{p} +     M^{1-1/2^{m-1}} +  \frac{M^{2-m/2^{m-1}}}{p}  W.
\end{equation}
The H\"{o}lder inequality implies the bound
 \begin{equation*}
\begin{split}
W^{2^{m-1}} \ll K^{2^{m-1}-1}
& \sum_{k=1}^K  \\
&\sum_{0 < |\ell_1|, \ldots,  |\ell_{m-1}| < M}
\min\left\{M, \left \|\frac{a}{p}m! k \ell_1   \ldots
 \ell_{m-1}\right\|^{-1}\right\}.
\end{split}
\end{equation*}
Collecting together the terms with the same value of
$z = m! k \ell_1   \ldots  \ell_{m-1}$ and recalling the well-known bound on the divisor function,
we conclude that
$$
W^{2^{m-1}} \ll K^{2^{m-1}-1} p^{o(1)}
\sum_{|z| < m!KM^{m-1}}
\min\left\{M, \left \|\frac{a}{p}z \right\|^{-1}\right\}.
$$
Since the sequence $ \|am/p \|$ is periodic with
period $p$, we see that
 \begin{equation*}
\begin{split}
W^{2^{m-1}} & \ll K^{2^{m-1}-1} p^{o(1)} \frac{KM^{m-1}}{p}
\sum_{z=1}^p
\min\left\{M, \left \|\frac{a}{p}z \right\|^{-1}\right\}\\
& \ll K^{2^{m-1}-1} p^{o(1)} \frac{KM^{m-1}}{p}
\(M +
\sum_{z=1}^p   \left \|\frac{z}{p} \right\|^{-1} \)\\
& \ll K^{2^{m-1}} M^{m-1}p^{o(1)}  .
 \end{split}
\end{equation*}
Thus, recalling the choice of $K$, we derive
$$
W \le K  M^{(m-1)/2^{m-1}}p^{o(1)} \le   M^{(m-1)/2^{m-1}-1}p^{1+o(1)},
$$
which after the substitution in~\eqref{eq:J and W} concludes
the proof.

\subsection{Proof of Corollary~\ref{cor:HyperellipticAnyRange}}

Assume that  $H=\Hb$ for some vector $\vb=(b_0,\ldots ,b_{2g-1}) \in \F_p^{2g}$.
We recall that all components of any vector $\va \in \fB$ are
non-zero
modulo $p$.
Hence, $b_0 \in \F_p^*$ and  we see from~\eqref{eq:isom} (combinig
the equations with
$i=2g + 1 - h$
and $i=2g-1$) that
\begin{equation}\label{eq:cong 1}
\begin{split}
a_{2g-1}^{h}\equiv & \lambda  a_{g + 1 - h}^{2}\pmod
p, \\
R_{g + 1 - h}+1\le & a_{g + 1 - h}\le  R_{g + 1 - h}+M,\\
 R_{2g-1}+1\le&
a_{2g-1}\le R_{2g-1}+M,
\end{split}
\end{equation}
where
\begin{equation}
\label{eq:lambda}
\lambda=b_{2g-1}^{h}/b_{g + 1 - h}^{2}.
\end{equation}
 We also observe that
$$
\alpha^2=b_{2g-1}/a_{2g-1}.
$$
Thus, each
solution $(a_{g + 1 - h},a_{2g-1})$ of~\eqref{eq:cong 1} determines the value of
$\alpha^2$ and  therefore, all other  values of $a_0,a_1,\ldots ,a_{2g-1}$.

Thus we have seen that $N(H;\fB)\le T$, where $T$ is the number of
solutions $(x,y)$ of the congruence
\begin{equation}
\label{eq:xh y2}
x^{h}\equiv \lambda y^{2}\pmod p, \quad
R+1\le x\le R+M, \
S+1\le y\le S+M,
\end{equation}
where $R=R_{g + 1 - h}$, $S=R_{2g-1}$ and $\lambda$ is given by~\eqref{eq:lambda}.

We now observe that the congruence~\eqref{eq:xh y2} taken with $h = 4$,
which is admissible for $g \ge 2$, implies
$$
x^2\equiv \mu y\pmod p, \quad
R+1\le x\le R+M, \
S+1\le y\le S+M,
$$
where $\mu$ is one of the two square roots of $\lambda$ (we recall that $g\ge 2$).
Applying Theorem~\ref{thm:Polynomial Map} with a quadratic polynomial $f$, we immediately obtain
the desired result.

\subsection{Proof of Theorem~\ref{thm:HyperellipticOptimal}}

As in the proof of of Corollary~\ref{cor:HyperellipticAnyRange}
we let $H=\Hb$ for some $\vb=(b_0,\ldots ,b_{2g-1}) \in \F_p^{2g}$.

We can   assume that  $M<p^{1/4}$ as otherwise the results
are weaker
than the trivial upper bound $N(H;\fB)\ll M$.

Also we can assume that $T>M^{1/h}$, where, as before,
$T$ is the number of solutions $(x,y)$ to the
congruence~\eqref{eq:xh y2} as otherwise there is nothing to prove.

We follow the proof of Theorem~\ref{thm:ConcentElliptic}. We fix some  $L$ with
\begin{equation}\label{eq:cong L}
 1\le L\le \frac{T}{8(h+1)},
\end{equation}
to
be chosen later. Note that if $T<16g+16$ there is nothing to
prove. Thus, there exists $Q$ such that the congruence
$$
x^h\equiv \lambda y^{2}\pmod p, \qquad
Q\le x\le Q+M/L,\
S+1\le y\le S+M,
$$
has at least $T/L$ solutions.
Since there are at most two solutions to the above congruence
with the same value of $x$, by the pigeonhole principle,
there exists an interval of length $4(h+1)M/T$  containing at least
$2(h+1)$ solutions $(x,y)$ with pairwise distinct values $x$. Let
$x_0$ be the first of these values and $(x_0,y_0)$ the solution. It is
clear that $T/L$ is bounded by the number of solutions of
\begin{eqnarray*}
(x_0+x)^{h}\equiv \lambda (y_0+y)^2\pmod p,\\
-M/L\le x\le M/L,\quad -M\le y\le M,
\end{eqnarray*}
which is equivalent to
\begin{equation}
\begin{split}
\label{eq:cong c}
c_{h}x^{h}+\ldots +c_1x+c_0y&\equiv y^2\pmod p,\\
-M/L\le x\le M/L,\quad -&M\le y\le M,
\end{split}
\end{equation}
where
$$
c_0=-2y_0\mand c_j=\lambda^*\binom{h}jx_0^{h-j},\
j=1,\ldots ,h,
$$
where $\lambda^*$ is defined by $\lambda^* \lambda \equiv 1 \pmod p$
and $1\le \lambda^* < p$. In particular, $c_h\not\equiv 0\pmod p$.
Besides, there are at least $2h+1$ solutions $(x,y)$ of~\eqref{eq:cong c}
with $x$ pairwise distinct and such that $1\le x\le 4(h+1)M/T$. From
these $2h+1$ values we fix $h$: $(x_1,y_1),\ldots
,(x_{h},y_{h})$ and rewrite~\eqref{eq:cong c} in the form
\begin{equation}\label{eq:cong 2}
\left (\begin{matrix}x^{h} &\ldots & x & y\\ x_{h}^{h}
&\ldots &
x_{h} & y_{h}\\ & \ldots &\\x_1^{h} &\ldots & x_1 & y_1\\
\end{matrix}\right )\(\begin{matrix}c_{h}\\
\ldots \\ c_1 \\c_0  \end{matrix}\)\equiv
\(\begin{matrix}y^2\\y_{h}^2\\\ldots \\y_1^2
\end{matrix}\)\pmod p.
\end{equation}
Since $h$ is odd, by Lemma~\ref{lem:deter},
we know that at most $2h$
pairs $(x,y)$, with $x$ pairwise distinct, satisfy both the
congruence~\eqref{eq:cong 2} and the congruence
$$
\left |\begin{matrix} x^{h} &\ldots & x & y\\ x_{h}^{h}
&\ldots & x_{h} & y_{h}\\  & \ldots & \\x_1^{h} &\ldots & x_1 &
y_1
\end{matrix}\right |\equiv 0\pmod p.
$$
Since there are at least $2h+1$ solutions of~\eqref{eq:cong 2}, for one of
them, say $(x_{h+1},y_{h+1})$, we have
$$
\Delta=\left |\begin{matrix} x_{h+1}^{h} &\ldots & x_{h+1} & y_{h+1}\\
x_{h}^{h} &\ldots & x_{h} & y_{h}\\  & \ldots &
\\x_1^{h} &\ldots & x_1 & y_1
\end{matrix}\right |\not \equiv 0\pmod p.
$$

Note that $1\le |\Delta |\ll (M/T)^{h(h+1)/2}M$. Now we solve the
system
\begin{equation}\label{eq:cong 3}
\(\begin{matrix}
x_{h+1}^{h} &\ldots & x_{h+1} & y_{h+1}\\
x_{h}^{h} &\ldots &
x_{h} & y_{h}\\ & \ldots &\\x_1^{h} &\ldots & x_1 & y_1\\
\end{matrix}\)
\(\begin{matrix}c_{h}\\ c_{h-1}\\ \ldots \\ c_0  \end{matrix}\)
\equiv
\(\begin{matrix}y_{h+1}^2\\y_{h}^2\\\ldots \\y_1^2\end{matrix}\)
\pmod p
\end{equation}
with respect to $(c_{h},\ldots ,c_1,c_0)$. We write $\Delta_j$ for
the determinant of the matrix on the left hand side where we have
substituted the column $j$ by the vector $(y_{h+1}^2,\ldots
,y_1^2)$. With this notation we have that
$$
c_j=\frac{\Delta_{h+1-j}}{\Delta},\quad j=0,\ldots h,
$$
and the congruence~\eqref{eq:cong c} is equivalent to
$$
\Delta_1x^{h}+\Delta_2x^{h-1}+\ldots
+\Delta_{h}x+\Delta_{h+1}y- \Delta y^2\equiv 0\pmod p.
$$
In particular, $\Delta_1\not\equiv 0\pmod p$. We can
write this congruence as an equation over $\Z$:
\begin{equation}\label{eq:integer}
\Delta_1x^{h}+\Delta_2x^{h-1}+\ldots
+\Delta_{h}x+\Delta_{h+1}y- \Delta y^2=pz,
\qquad z\in \Z.
\end{equation}

We  can easily check that
$$
|\Delta_{h+1}|\ll (M/T)^{h(h+1)/2}M^2
$$
and
$$
|\Delta_j|\ll (M/T)^{h(h-1)/2+j -1}M^3,\qquad j=1,\ldots ,h.
$$
Thus, collecting the above estimates, we derive
\begin{eqnarray*}
|z|&\ll &\frac{1}{p} \(\sum_{j=1}^h | \Delta_j|(M/L)^{h-j+1}+
|\Delta_{h+1}|M+|\Delta|M^2\)\\
&\ll &\frac{M^3}{p} \(\sum_{j=1}^h (M/T)^{h(h-1)/2+j -1}(M/L)^{h-j+1}+
(M/T)^{h(h+1)/2}\)\\
&\ll &\frac{M^3}{p} \(M^{h(h+1)/2}T^{-h(h-1)/2}L^{-h} \sum_{j=1}^h (TL)^{-j +1}+
(M/T)^{h(h+1)/2}\)\\
&\ll
&\frac{M^{h(h+1)/2+3}}{pT^{h(h-1)/2}L^{h}}.
\end{eqnarray*}

Since $h$ is odd, and $\Delta\not=0,\, \Delta_1\not=0$, we have that, for each $z$, the curve~\eqref{eq:integer} is absolutely irreducible.
Thus by Lemma~\ref{lem:BombPila}
it contains at most $M^{1/h+o(1)}$ integer points $(x,y)$
with $|x|,|y|\le M$. Hence
\begin{equation}
\label{eq:prelim}
T\le LM^{1/h+o(1)}
\(1+\frac{M^{h(h+1)/2+3}}{pT^{h(h-1)/2}L^{h}}\)
\end{equation}
for any $L$ satisfying~\eqref{eq:cong L}.

We can assume that the following lower bounds hold for $T$:
\begin{equation}
\label{eq:T large}
T>M^{1/h} \mand T>16(h+1)\(M(M^4/p)^{2/h(h+1)} + 1\)
\end{equation}
since otherwise there is
nothing to prove.

Take $L=\fl{1+(M^{h(h+1)/2+3}/p)^{2/h(h+1)}}$.
We note that~\eqref{eq:cong L} holds as otherwise $L \ge 2$ and
we have
\begin{equation*}
\begin{split}
 \(\frac{M^{h(h+1)/2+3}}p\)^{2/h(h+1)} \ge  L-1 \ge
 \frac L2&>\frac T{16(h+1)}\\
 > M\(\frac{M^4}p\)^{2/h(h+1)}&=
\(\frac{M^{h(h+1)/2+4}}p\)^{2/h(h+1)},
\end{split}
\end{equation*}
which is impossible.

If $M<p^{1/(h(h+1)/2+3)}$ we have $L=1$ and also
$$
\frac{M^{h(h+1)/2+3}}{pT^{h(h-1)/2}L^{h}}   \le
\frac{M^{h(h+1)/2+3}}{p}< 1.
$$
In this case, the bound~\eqref{eq:prelim} yields
$$
T\ll M^{1/h+o(1)}.
$$
If $M\ge p^{1/(h(h+1)/2+3)}$, we have
$$(M^{h(h+1)/2+3}/p)^{2/h(h+1)} \ll L\ll
(M^{h(h+1)/2+3}/p)^{2/h(h+1)}
$$
and, recalling our assumption~\eqref{eq:T large} and the choice
of $L$, we obtain
\begin{equation*}
\begin{split}
&\frac{M^{h(h+1)/2+3}}{pT^{h(h-1)/2}L^{h}} \\
 &\qquad \quad \ll
\frac{M^{h(h+1)/2+3}}{pM^{h(h-1)/2}(M^4/p)^{(h-1)/(h+1)}(M^{h(h+1)/2+3}/p)^{2/(h+1)}}
= 1.
\end{split}
\end{equation*}
Hence, in this case we derive from~\eqref{eq:prelim} that
\begin{equation*}
\begin{split}
T\le (M^{h(h+1)/2+3}/p)^{2/h(h+1)} & M^{1/h+o(1)} \\
\le & M\left(M^4/p\right )^{2/h(h+1)+o(1)},
\end{split}
\end{equation*}
which concludes the proof.

\subsection{Proof of Theorem~\ref{thm:main3}}

Clearly
\begin{equation}
\label{eq:fH moments} \sum_{H \in \cH\(\fB\)} N(H;\fB) = M^{2g}
\mand \sum_{H \in \cH\(\fB\)} N(H;\fB)^2= T(\fB).
\end{equation}
As in~\cite{CSZ}, using~\eqref{eq:fH moments} and  the Cauchy
inequality we derive
$$
\# \cH\(\fB\) \ge M^{4g} T(\fB)^{-1}.
$$

From~\eqref{eq:isom} we observe that $T(\fB)$ is the numbers of
pairs of vectors $(\va,\vb)$, $\va,\vb\in \fB$,  such that there exists
$\alpha$ such that
$$a_i\equiv \alpha^{4g+2-2i}b_i \pmod p, \qquad i=0,\ldots, 2g-1.
$$
In particular,
$$
a_{2g-1}^3b_{2g-2}^2 \equiv  a_{2g-2}^2b_{2g-1}^3 \pmod p.
$$
Now, by~\cite[Theorem~7]{CSZ} we see that there are only
$O\(M^4/p + M^{2+o(1)}\)$ possibilities for the quadruple
$(a_{2g-1},a_{2g-2},b_{2g-1},b_{2g-2})$. When it is fixed, the
parameter $\alpha$ in~\eqref{eq:isom} can take at most 4 values, and
thus for every choice of $(a_0, \ldots, a_{2g-3})$ there are only 4
choices for  $(b_0, \ldots, b_{2g-3})$. Therefore,
\begin{equation}
\label{eq:TB} T(\fB) \le M^{2g-2}\(M^4/p + M^{2+o(1)}\).
\end{equation}

When $M<p^{1/(2g)}$ we obtain $T(\fB)\le M^{2g+o(1)}$ and $\#
\cH\(\fB\)\ge M^{2g+o(1)}$, which proves Theorem~\ref{thm:main3} in
this range.

When $M\ge p^{1/(2g)}$ we use a different approach. Using the
notation
$$N_i(\lambda)=
\# \{(a_i,b_i)~:~a_i/b_i\equiv\lambda \pmod p,\ R_i+1\le
a_i,b_i\le R_i+M\},
$$ we can write
$$
T(\fB)=\sum_{\alpha=1}^{p-1}N_0(\alpha^{4g+2})N_1(\alpha^{4g})\ldots
N_{2g-1}(\alpha^4).
$$
Thus,
\begin{equation*}
\begin{split}
T^{2g}(\fB)&\le \left (\sum_{\alpha=1}^{p-1}N_0^{2g}(\alpha^{4g+2})\right )\ldots \left (\sum_{\alpha\ne
0}N_{2g-1}^{2g}(\alpha^{4})\right )\\ &\le\left
((4g+2)\sum_{\alpha=1}^{p-1}N_0^{2g}(\alpha)\right )\ldots \left
(4\sum_{\alpha=1}^{p-1}N_{2g-1}^{2g}(\alpha)\right )
\end{split}
\end{equation*}
and then we have
$$T(\fB)\ll \max_i\sum_{\alpha=1}^{p-1}N_i^{2g}(\alpha).$$
We observe that for any $\alpha\not \equiv 0 \pmod p$ there exist
integers  $r,s$ with
$1\le |r|,s\le p^{1/2}$, $(r,s)=1$ and such that
$\alpha\equiv r/s \pmod p$. Thus
$$
\sum_{\alpha=1}^{p-1}N_i^{2g}(\alpha)\le \sum_{\substack{1\le
r,s<p^{1/2}\\ \gcd (r,s)=1}}N_i^{2g}(r/s)+\sum_{\substack{1\le
r,s<p^{1/2}\\ \gcd(r,s)=1}}N_i^{2g}(-r/s).
$$

Our estimate of  $N_i(r/s)$ is based on an argument that is very close
to that used in the proof of~\cite[Lemma~1]{ACZ}.
Namely, we observe that $N_i(r/s)$ is the number of solutions $(x,y)$ to the
congruence
$$x/y\equiv r/s\pmod p, \qquad R_i+1\le x,y\le R_i+M,
$$
which is equivalent to the congruence
$$sx-ry\equiv c\pmod p,\quad
1\le x,y\le M,
$$
for a suitable $c$. We can write the congruence as
an equation in integers
$$sx-ry=c+zp,\quad
1\le x,y\le M,\quad z\in \Z.  $$
We observe that
$$
|z|\le
\frac{|s|M+|r|M+|c|}p\le \frac{(|s|+|r|)M}p+1.
$$

For each $z$ we consider, in case it has, a solution $(x_z,y_z)$,
$1\le x_z,y_z\le M$. The solutions of the diophantine equation above
is given by $(x,y)=(x_z+rt,y_z+st)$, $t\in \Z$. The restriction $1\le
x,y\le M$ implies that $|t|\le M/\max\{r,s\}$.

Thus we have
\begin{equation*}
\begin{split}
N_i(r/s)&\le \left (1+\frac{2M}{\max\{r,s\}}\right )\left
(1+\frac{2M(s+r)}p\right )\\
&\le 1+\frac{4M\max\{r,s\}}p+\frac{2M}{\max\{r,s\}}+\frac{4M^2}p.
\end{split}
\end{equation*}
Therefore
\begin{equation*}
\begin{split}
 \sum_{\substack{1\le
r,s<p^{1/2}\\ \gcd (r,s)=1}}&N_i^{2g}(r/s)\\
\ll &\sum_{1\le
r,s<p^{1/2}}\(1+\frac{M^{2g}\(\max\{r,s\}\)^{2g}}{p^{2g}}+
\frac{M^{2g}}{\(\max\{r,s\}\)^{2g}}+\frac{M^{4g}}{p^{2g}}\)\\
\ll &\sum_{1\le r<s<p^{1/2}}\left
(1+\frac{M^{2g}s^{2g}}{p^{2g}}+\frac{M^{2g}}{s^{2g}}+
\frac{M^{4g}}{p^{2g}}\right )\\
\ll &\sum_{1\le s<p^{1/2}}\left
(s+\frac{M^{2g}s^{2g+1}}{p^{2g}}+
\frac{M^{2g}}{s^{2g-1}}+\frac{M^{4g}s}{p^{2g}}\right)\\
\ll &p+\frac{M^{2g}}{p^{g-1}}+M^{2g}\sum_{1\le s<p^{1/2}}\frac
1{s^{2g-1}}+\frac{M^{4g}}{p^{2g-1}}.
\end{split}
\end{equation*}
The estimate of the sum with $N_i^{2g}(-r/s)$ is fully analogous.

Assume that $M\ge p^{1/(2g)}$ and observe that
$$
\sum_{1\le s<p^{1/2}}\frac 1{s^{2g-1}}
\ll \begin{cases} \log M , & \text{ if } g=1,\\
1, & \text{ if } g\ge 2.\end{cases}
$$
Thus we have
\begin{equation}\label{T(B)}
T(\fB)\ll \begin{cases}M^2\log
M+M^4/p,& \text{ if } g=1,\\ M^{2g}+M^{4g}/p^{2g-1},& \text{ if } g\ge
2,\end{cases}\end{equation}
which gives
$$
\# \cH\(\fB\) \ge M^{4g} T(\fB)^{-1}\gg \begin{cases}\min\{p,M^{2+o(1)}\}, & \text{ if } g=1,\\
\min\{p^{2g-1},M^{2g}\},& \text{ if } g\ge 2,\end{cases}
$$
and proves
Theorem~\ref{thm:main3} in the range $M\ge p^{1/2g}$.

\section{Comments}

The problem of obtaining a nontrivial upper bound for $I_{f}(M;R,S)$
in the range $p^{1/3}<M<p^{1/2}$ is still  open.

On the other hand, we note that using bounds of exponential sums
obtained
with the method of Vinogradov instead of Lemma~\ref{lem:Weyl},
see~\cite{BoWo,Ford,Par,Vaughan} and references therein,
also leads  to some nontrivial on $J_f(M;R,S)$
but these results seem to be weaker than
a combination
of Theorem~\ref{thm:Polynomial Map}  with the bounds from~\cite{CGOS}.

Similar ideas can be exploited to obtain lower bounds for the cardinality
of the set
$\cI(\cB)$
of non-isomorphic isogenous elliptic curves $\Ha$ with coefficients in
a cube $\cB$.

Indeed, let us denote by $\cI_t$ the isogeny class consisting of
elliptic curves over $\F_p$  with the same number $p+1-t$ of $\F_p$-rational points.
By a result of Deuring~\cite{Deu},
each admissible value of $t$, that is, with $|t|\leq 2p^{1/2}$, is taken and hence
there are about $4p^{1/2}$ isogeny classes. Furthermore,
Birch~\cite{Bir} has actually given a formula via the Kronecker class
number for the number of isomorphism classes of elliptic curves over a
finite field $\F_q$ lying in $\cI_t$. Finally, Lenstra~\cite{Lens} has
 obtained upper
and lower bounds for this number and, in particular,  shown that
the number of isomorphism classes of elliptic curves of a given order
is $O\(p^{1/2}\log p \(\log\log p\)^{2}\)$.

Observe that once again bounds for $N(H;\fB)$ can be translated into
bounds for the number of isogenous non-isomorphic curves with
coefficients in $\fB$, via multiplication by $p^{1/2+o(1)}$. However,
as we have done before, one can obtain better bounds in terms of $T(\fB)$
which is given by~\eqref{eq:fH moments}.

Thus, using~\eqref{eq:fH moments} and~\eqref{T(B)}, with $g=1$,
we see that for the set  $\cH(t,\fB)$ of elliptic curves   $\Ha \in
\cI_t$ with $\va \in \fB$, we have
\begin{align*}
\# \cH(t,\fB) &= \sum_{H\in \cH\(\fB\)\cap \cI_t }
 N(H,\fB)\\ &\leq
(\#\cI_t)^{1/2}\(\sum_{H\in\cH\(\fB\)}N(H,\fB)^2\)^{1/2}=(\#\cI_t)^{1/2} T(\fB)^{1/2}\\
&\ll \(M^2p^{-1/4}+p^{1/4}M\log^{1/2}M\)(\log p)^{1/2}\log\log p.
\end{align*}
This improves the trivial bound
$$\cH(N,\fB) \ll \min\{M^2, p^{3/2} (\log p)^{1/2}\log\log p\}
$$
for $p^{1/4+\varepsilon} \le M \le p^{7/8-\varepsilon}$ (with any fixed $\varepsilon>0$). Furthermore, it also implies the lower bound
\begin{align*}
\# \cI(\cB) & \gg \frac{M^2}{\max_{|t|\in 2p^{1/2}} \cH(t,\fB)} \\
& \gg \min\{p^{1/4}, Mp^{-1/4}\log^{-1/2}M\}
(\log p)^{-1/2}(\log\log p)^{-1}.
\end{align*}

\section*{Acknowledgements}

The authors are grateful to Alfred Menezes for discussions
and useful references on isomorphism classes of hyperelliptic
curves.

M.-C. Chang  is very grateful to the Department of Mathematics
of the
University of California at Berkeley for its hospitality.

During the preparation
of this paper, M.-C.~Chang was supported in part by NSF, J.~Cilleruelo was supported by
Grant MTM 2008-03880 of MICINN (Spain), M.~Z.~Garaev was  supported in part by the Red Iberoamericana de Teor\'ia de N\'umeros, I.~E.~Shparlinski was supported in part
by ARC grant DP1092835
and  by NRF Grant~CRP2-2007-03, Singapore. A.~Zumalac\'arregui was supported by a FPU grant from Ministerio de Educaci\'on, Ciencia y
Deporte, Spain.

\end{document}